\newtheorem{theorem}{Theorem}
\newtheorem{lemma}[theorem]{Lemma}
\newtheorem*{theorema*}{Theorem A}
\newtheorem*{theoremb*}{Theorem B}
\newtheorem*{theoremc*}{Theorem C}
\newtheorem*{theoremd*}{Theorem D}
\newtheorem*{theoreme*}{Theorem E}
\newtheorem*{theoremf*}{Theorem F}
\newtheorem*{theoremg*}{Theorem G}
\newtheorem*{corollaryf*}{Corollary F}
\theoremstyle{remark}
\DeclareMathOperator*{\iintt}{\iint}
\newcommand{\I}{\hspace{0.5mm}\text{I}\hspace{0.5mm}}
\newcommand{\II}{\text{I \hspace{-2.8mm} I} }
\newcommand{\re}{\mathbb{R}}
\newcommand{\rn}{\mathbb{R}^n}
\newcommand{\Dl}{\Delta}
\newcommand{\s}{\sigma}
\newcommand{\wt}{\widetilde}
\newcommand{\dd}{\partial}
\newcommand{\les}{\lesssim}
\newcommand{\al}{\alpha}
\newcommand{\be}{\beta}
\newcommand{\dl}{\delta}
\newcommand{\g}{\gamma}
\newcommand{\jb}[1]
{\langle #1 \rangle}
\newcommand{\R}{\mathbb{R}}
\newcommand{\noi}{\noindent}
\DeclareMathOperator*{\supp}{supp}
\newcommand{\cz}{Calder\'on-Zygmund\ }
\begin{document}

\baselineskip = 15 pt

\subjclass[2010]{Primary 35S05, 47G30; Secondary 42B15, 42B20, 42B25, 47B07, 47G99}

\keywords{Bilinear pseudodifferential operators, bilinear
H\"ormander classes, compact bilinear operators, singular integrals, Calder\'on-Zygmund theory, commutators}

\address{\'Arp\'ad B\'enyi, Department of Mathematics,
516 High St, Western Washington University, Bellingham, WA 98225,
USA} \email{arpad.benyi@wwu.edu}

\address{Tadahiro Oh, Department of Mathematics, Princeton University, Fine Hall, Washington Rd, Princeton, NJ 08544-1000, USA}
\email{hirooh@math.princeton.edu}

\title[Smoothing of bilinear commutators]
{Smoothing of commutators for a H\"ormander class of bilinear pseudodifferential operators}

\author[\'A. B\'enyi and T. Oh]
{\'Arp\'ad B\'enyi and Tadahiro Oh}

\thanks{The first author is partially supported by Simons Foundation Grant No.~246024. The second author acknowledges support from an AMS-Simons Travel Grant.}


\begin{abstract}
Commutators of bilinear pseudodifferential operators with symbols in the H\"ormander class $BS_{1, 0}^1$ and multiplication by Lipschitz functions are shown to be bilinear Calder\'on-Zygmund operators. A connection with a notion of compactness in
the bilinear setting  for the iteration of the commutators is also made. 
\end{abstract}

\maketitle

\section{Motivation, preliminaries and statements of main results }\label{intro}

The work of Calder\'on and Zygmund on singular integrals and Calder\'on's ideas \cite{Cal1, Cal2} about improving a pseudodifferential calculus, where the smoothness assumptions on the coefficients are minimal, have greatly affected research in quasilinear and nonlinear PDEs. The subsequent investigations about multilinear operators initiated by Coifman and Meyer \cite{CM} in the late 70s have added to the success of Calder\'on's  work on commutators. A classical bilinear estimate, the so-called Kato-Ponce commutator estimate \cite{KP}, is crucial in the study of the Navier-Stokes equations. This estimate is a general Leibniz-type rule which takes the form
\begin{equation}\label{Kato-Ponce}
\|D^\alpha (fg)\|_{L^r}\lesssim \|D^\alpha f\|_{L^p}\|g\|_{L^q}+\|f\|_{L^p}\|D^\alpha g\|_{L^q},
\end{equation}
for \(1<p, q\leq\infty, 0<r<\infty, 1/p+1/q=1/r\) and \(\alpha > 0\). More general Leibniz-type rules that apply to bilinear pseudodifferential operators with symbols in the bilinear
H\"ormander classes \(BS_{\rho, \delta}^m\) (see \eqref{bi-pseudo} below for their definition) can be found, for example, in the works of B\'enyi et al.~\cite{BMNT, BBMNT, BNT} and Bernicot et al.~\cite{BMMN}. Interestingly, for $\alpha=1$ and in dimension one, the Kato-Ponce estimate \eqref{Kato-Ponce} is closely related to the boundedness of the so-called \emph{Calder\'on's first commutator}.
Given a Lipschitz function $a$ and \(f\in L^2\),
define $C(a, f)$ by
\[C (a, f)= p.v.\int_{\re} \frac{a(x)-a(y)}{(x-y)^2}f(y)\,dy.\]

\noi
Then, denoting by $H$ the classical Hilbert transform, we can identify the operator \(C (a, \cdot)\) with the commutator of \(T=H \circ \partial_x \) and the multiplication by the Lipschitz function $a$; that is, \(C (a, f) = [T, a] (f) := T(af)-aT(f).\)
While we have no hope of controlling each of the individual terms defining \([T, a]\), the commutator itself does behave nicely; Calder\'on showed \cite{Cal2} that \(\big\|[T, a]\big\|_{L^2}\leq \|a'\|_{L^\infty}\|f\|_{L^2}\), effectively producing the bilinear boundedness of the operator \(C: \text{Lip}_1\times L^2\to L^2\). Moreover, the boundedness of the first commutator can be extended to give the following result, see \cite[Theorem 4 on p.~90]{MC}:

\begin{theorema*}
Let \(T_\sigma\) be a linear pseudodifferential operator with symbol \(\sigma\in S_{1, 0}^1\)
and $a$ be a Lipschitz function such that \(\nabla a\in L^\infty\).
Then,   \([T_\sigma, a]\) is a linear Calder\'on-Zygmund operator. In particular,  \([T_\sigma, a]\) is bounded on \(L^p, 1<p<\infty\). Conversely, if $[D_j , a]$ is bounded on $L^2$, $j = 1\dots, n$, then \(\nabla a\in L^\infty\).
\end{theorema*}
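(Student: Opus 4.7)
The plan is to verify that $[T_\sigma, a]$ falls within the classical Calder\'on--Zygmund framework, so that $L^p$-boundedness for $1 < p < \infty$ is a consequence of (i) standard kernel estimates and (ii) $L^2$-boundedness.

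For (i), I would start from the Schwartz kernel representation of $T_\sigma$. Since $\sigma \in S_{1,0}^1$, integration-by-parts against the oscillatory factor $e^{i(x-y)\cdot\xi}$ yields $|\partial_x^\alpha \partial_y^\beta K(x,y)| \lesssim |x-y|^{-n-1-|\alpha|-|\beta|}$ for $x \neq y$. The Schwartz kernel of $[T_\sigma, a]$ is then $\widetilde K(x,y) = (a(y) - a(x))K(x,y)$, and the Lipschitz bound $|a(x)-a(y)| \leq \|\nabla a\|_{L^\infty}|x-y|$ absorbs one power of $|x-y|$ and hence lowers the order of the kernel by one: $|\widetilde K(x,y)| \lesssim |x-y|^{-n}$ with matching gradient estimates $|\nabla_{x,y}\widetilde K(x,y)| \lesssim |x-y|^{-n-1}$, obtained by distributing derivatives on the two factors and using $\|\nabla a\|_{L^\infty} < \infty$. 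These are precisely the standard Calder\'on--Zygmund kernel conditions.

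The main obstacle is (ii), the $L^2$-boundedness, because Lipschitz regularity of $a$ is not enough to apply the full symbolic calculus term by term. The underlying heuristic is the formal expansion $[T_\sigma, a] \sim \mathrm{Op}\bigl(\tfrac{1}{i}\sum_j(\partial_{\xi_j}\sigma)(\partial_j a)\bigr)$, and the right-hand side is of order $0$ since $\partial_{\xi_j}\sigma \in S_{1,0}^0$ and $\partial_j a \in L^\infty$. To make this rigorous for merely Lipschitz $a$, I would combine a Littlewood--Paley decomposition of $\sigma$ in $\xi$ with a Bony paraproduct decomposition of $a$, and reassemble the commutator as a zero-order paradifferential operator whose $L^2$-boundedness is standard. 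Alternatively, a $T(1)$-type argument applies: the kernel estimates from (i) give the weak boundedness property, while $[T_\sigma,a]\mathbf{1}$ and $[T_\sigma,a]^*\mathbf{1}$ can be shown to lie in $\mathrm{BMO}$ by testing against $H^1$-atoms and exploiting the extra $|x-y|$ factor coming from the Lipschitz hypothesis.

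Once (i) and (ii) are in place, the standard Calder\'on--Zygmund machinery delivers $L^p$-boundedness for all $1<p<\infty$. The converse is immediate: taking $D_j$ to be the first-order operator $\partial_j$ (or $-i\partial_j$), the distributional identity $[D_j,a]f = (\partial_j a)f$ shows that if $[D_j,a]$ extends boundedly to $L^2$ it must coincide with multiplication by $\partial_j a$, forcing $\partial_j a \in L^\infty$ for each $j$, i.e., $\nabla a \in L^\infty$.
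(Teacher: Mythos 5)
The paper does not prove Theorem~A; it quotes it from Meyer and Coifman. The natural standard of comparison is the paper's own proof of the bilinear analogue, Theorem~\ref{main1}, and your plan --- Calder\'on--Zygmund bounds on $\widetilde K(x,y)=(a(y)-a(x))K(x,y)$ followed by a $T(1)$ argument --- has the same skeleton. Your step~(i) is correct and is the linear version of Lemmas~\ref{L00} and~\ref{L0}, and your converse is exactly the paper's closing remark.

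The concrete gap is the claim that ``the kernel estimates from (i) give the weak boundedness property.'' They do not: $|\widetilde K(x,y)|\lesssim|x-y|^{-n}$ is not locally integrable, so the pairing $\langle[T_\sigma,a]\phi^{x_0,t},\psi^{x_0,t}\rangle$ cannot be estimated by integrating the kernel against the bumps, and CZ kernel bounds in general pin down an operator only modulo terms supported on the diagonal. One must use that $\sigma$ is an honest $S^1_{1,0}$ symbol to control the near-diagonal contribution. The paper's mechanism is Lemma~\ref{L1}: by the Fundamental Theorem of Calculus, $\sigma(x,\xi)-\sigma(x,0)=\sum_j\xi_j\sigma_j(x,\xi)$ with $\sigma_j\in S^0_{1,0}$, hence $T_\sigma=\sum_j T_{\sigma_j}\circ D_j$ with each $T_{\sigma_j}$ a zero-order CZ operator bounded on $L^2$. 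Combined with $\|a-a(x_0)\|_{L^\infty(B_{x_0}(t))}\lesssim t\|\nabla a\|_{L^\infty}$ and $\|D_j\phi^{x_0,t}\|_{L^p}\lesssim t^{n/p-1}$, this produces the required $t^n$ WBP bound (Lemmas~\ref{L8} and~\ref{L7}); equivalently one could invoke $\|T_\sigma u\|_{L^2}\lesssim\|u\|_{H^1}$ directly. Your sketch has no mechanism of this kind absorbing the extra derivative near the diagonal.

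The same decomposition also yields the BMO conditions more directly than the atom testing you propose: after normalizing $\sigma(x,0)=0$ (harmless for the commutator), $[T_\sigma,a](1)=T_\sigma(a)=\sum_j T_{\sigma_j}(D_j a)\in BMO$ since $D_j a\in L^\infty$ and zero-order CZ operators map $L^\infty$ to $BMO$; the transpose reduces to the same shape via $([T_\sigma,a])^*=-[T_\sigma^*,a]$ with $\sigma^*\in S^1_{1,0}$, which is the linear form of Lemma~\ref{L2}. The paradifferential route you also mention is a legitimate independent argument --- indeed closer to what Meyer and Coifman actually do in the cited source --- but it is not the route the paper takes for the bilinear case.
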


The statement of Theorem~A is the very manifestation of the so-called \emph{commutator smoothing effect}: while the H\"ormander class of symbols \(S_{1, 0}^1\) does not yield bounded pseudodifferential operators on $L^p$, the commutator with a sufficiently smooth function (Lipschitz in our case) fixes this issue. An application of this result can be found in the work of Kenig, Ponce and Vega \cite{KPV} on nonlinear Schr\"odinger equations.

The smoothing effect of commutators gets better when we commute with special multiplicative functions. For example, the result of Coifman, Rochberg and Weiss \cite{CRW} gives the boundedness on $L^p (\mathbb{R}^n)$, $1 < p < \infty$,  of linear commutators of Calder\'on-Zygmund operators and pointwise multiplication, when the multiplicative function (or symbol) is
in the John-Nirenberg space $BMO$.
Uchiyama \cite{Uch} improved the boundedness to compactness
if the multiplicative function is in $CMO$; here, $CMO$ denotes the  closure of $C^\infty$-functions with compact supports under the $BMO$-norm. The $CMO$ in our context stands
for ``continuous mean oscillation'' and is not to be confused with other versions of  $CMO$ (such as ``central mean oscillation''). In fact, the $CMO$ we are considering coincides with $VMO$, the space of functions of ``vanishing mean oscillation'' studied  by Coifman and Weiss in \cite{CW}, but also differs from other versions of $VMO$ found in the literature; see, for example, \cite{BT1} for further comments on the relation between $CMO$ and $VMO$. An application of this compactness to deriving a Fredholm alternative for equations with $CMO$ coefficients in all $L^p$ spaces with $1<p<\infty$ was given by Iwaniec and Sbordone \cite{IS}. Other important applications appear in the theory of compensated compactness of Coifman, Lions, Meyer and Semmes \cite{CLMS} and in  the integrability theory of Jacobians, see Iwaniec \cite{Iwa}.

In this work,  we seek to extend such results for linear commutators to the multilinear setting. For ease of notation and comprehension, we restrict ourselves to the bilinear case. The bilinear Calder\'on-Zygmund theory is nowadays well understood; for example, the work of Grafakos and Torres \cite{GT} makes available a bilinear \(T(1)\) theorem for such operators. As an application of their \(T(1)\) result,  we can obtain the boundedness of bilinear pseudodifferential operators with symbols in appropriate H\"ormander classes of bilinear pseudodifferential symbols. Moreover, the bilinear H\"ormander pseudodifferential theory has nowadays a similarly solid foundation, see again \cite{BMNT, BBMNT, BNT} and the work of B\'enyi and Torres \cite{BT}.

Our discussion on 
the study of such classes of bilinear operators, on the one hand,
 exploits the characteristics of their kernels in the spatial domain
 and, on the other hand,  makes use of the properties of their symbols in the frequency domain.
 First, consider bilinear operators a priori defined from \(\mathcal S\times\mathcal S\) into \(\mathcal S'\) of the form
\begin{equation}
T (f, g)(x)=\int_{\rn}\int_{\rn} K(x, y, z)f(y)g(z)\,dydz.
\label{T1}
\end{equation}

\noi
Here, we assume that, away from the diagonal \(\Omega= \{(x,y,z) \in \re^{3n}: x=y=z \}\),
 the distributional kernel \(K\) coincides with a function  \(K(x,y,z)\) locally integrable in \(\re^{3n}\setminus \Omega\) satisfying  the following size and regularity conditions in $\mathbb R^{3n}\setminus \Omega$:
\begin{equation}\label{cz-size}
|K(x,y,z)| \lesssim \big(|x-y| + |x-z| + |y-z|\big)^{-2n},
\end{equation}
 and
\begin{equation}\label{cz-regularity}
|K(x,y,z) - K(x',y,z)| \lesssim \frac{|x-x'|}{\big(|x-y| +
|x-z| + |y-z|\big)^{2n + 1}},
\end{equation}
whenever \(|x - x'| \leq \frac{1}{2} \max\{|x-y|,|x-z|\}\). While the condition ~\eqref{cz-regularity} is not the most general that one can impose in such theory, see \cite{GT}, we prefer to work with this simplified formulation in order to avoid unnecessary further technicalities. For symmetry and interpolation purposes we also require that the formal transpose kernels $K^{*1}, K^{*2}$ (of the transpose operators $T^{*1}, T^{*2}$, respectively), given by
\[K^{*1}(x,y,z) = K(y,x,z) \quad \text{and}\quad
K^{*2}(x,y,z)=K(z,y,x),\]

\noi
also satisfy \eqref{cz-regularity}. Moreover, for an additional simplification, in the following we will replace the regularity conditions \eqref{cz-regularity} on \(K, K^{*1}\) and \(K^{*2}\) with the natural conditions on the gradient \(\nabla K\):
\begin{equation} \label{CZK}
|\nabla K(x,y,z)| \lesssim \big(|x-y| +
|x-z| + |y-z|\big)^{-2n -1},
\end{equation}

\noi
for $(x, y, z)\in \R^{3n} \setminus \Omega$.
 We say that such a kernel \(K(x,y,z)\) is a \emph{bilinear \cz kernel}.
Moreover, given  a bilinear operator $T$ defined  in \eqref{T1}
with a \cz  kernel $K$ (which satisfies ~\eqref{cz-size} and ~\eqref{CZK}),
 we say that  \(T\) is a {\it bilinear  \cz operator} if it extends to a bounded operator from \(L^{p_0} \times L^{q_0}\) into \(L^{r_0}\) for {\it some} \(1<p_0, q_0<\infty\) and \(1/p_0+1/q_0=1/r_0 \leq 1\).

The crux of bilinear \cz theory is the following statement, see \cite{GT}.

\begin{theoremb*}
Let \(T\) be a bilinear \cz operator.
Then,   \(T\) maps \(L^p\times L^q\) into \(L^r\) for all \(p, q, r\) such that \(1<p, q<\infty\) and \(1/p + 1/q = 1/r \leq 1\). Moreover, we also have the following end-point boundedness results:
\begin{enumerate}[{\rm (a)}]
\item When \(p=1\) or \(q=1\), then \(T\) maps \(L^p\times L^q\) into \(L^{r, \infty}\);
\item When \(p=q=\infty\), then \(T\) maps \(L^\infty\times L^\infty\) into \(BMO\).
\end{enumerate}
\end{theoremb*}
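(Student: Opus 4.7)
The plan is to follow the standard scheme of Grafakos--Torres \cite{GT}: extract the weak endpoint $L^1 \times L^1 \to L^{1/2,\infty}$ from the hypothesized strong bound at $(p_0, q_0, r_0)$ via a bilinear Calder\'on--Zygmund decomposition, then interpolate to fill in the full diagonal, and finally handle the $L^\infty \times L^\infty \to BMO$ endpoint by duality against $H^1$-atoms using only the kernel size/regularity conditions \eqref{cz-size}, \eqref{CZK}.

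First I would fix $f, g \in L^1$ and $\lambda > 0$, perform Calder\'on--Zygmund decompositions of $f$ at height $\alpha$ and of $g$ at height $\beta$ (both chosen in terms of $\lambda, \|f\|_{L^1}, \|g\|_{L^1}$ so as to balance contributions), producing $f = G_f + B_f$, $g = G_g + B_g$ with good parts bounded by $\alpha, \beta$, bad parts $B_f = \sum_j b_{f,j}$ and $B_g = \sum_k b_{g,k}$ mean-zero on Whitney cubes $Q_j, Q_k'$, and exceptional set measures $\lesssim \|f\|_{L^1}/\alpha$, $\|g\|_{L^1}/\beta$. Expanding
\[
T(f,g) = T(G_f, G_g) + T(G_f, B_g) + T(B_f, G_g) + T(B_f, B_g),
\]
the good-good term is controlled by the assumed $L^{p_0} \times L^{q_0} \to L^{r_0}$ bound together with Chebyshev, after first verifying the Marcinkiewicz-type interpolation inequality $\|G_f\|_{L^{p_0}} \lesssim \alpha^{1 - 1/p_0} \|f\|_{L^1}^{1/p_0}$ (and likewise for $G_g$). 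The three remaining terms are treated by excising a suitable enlargement of the Whitney cubes from $\{|T(f,g)| > \lambda\}$ (the removed portion already carries acceptable measure) and, outside the enlargement, using the mean-zero cancellation of $b_{f,j}, b_{g,k}$ against the H\"older continuity \eqref{CZK} of $K$ to produce an integrable bound. Optimizing over $\alpha, \beta$ gives weak $(1,1,1/2)$.

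With the strong $(p_0, q_0, r_0)$ bound and the weak $(1,1,1/2)$ bound available, I would invoke multilinear real interpolation of Marcinkiewicz type to obtain strong-type $L^p \times L^q \to L^r$ throughout the range $1 < p, q < \infty$, $1/p + 1/q = 1/r \leq 1$. The weak endpoints in (a) then drop out of the same interpolation scheme (or, more directly, from a one-sided Calder\'on--Zygmund decomposition applied only to the $L^1$ entry). To prove (b), I would use atomic $H^1$--$BMO$ duality: for a normalized $L^2$-atom $a$ supported in a ball $B$ with $\int a = 0$, split $f = f\chi_{2B} + f\chi_{(2B)^c}$ and similarly for $g$, so that $\int T(f,g)\, a$ expands into four integrals. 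The piece with both arguments localized to $2B$ is handled by the strong-type bound already proven, via H\"older applied to $a$, while each of the remaining three integrals exploits $\int a = 0$ together with the kernel regularity \eqref{CZK} to extract a gain that integrates to $\|f\|_{L^\infty}\|g\|_{L^\infty}$.

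The principal technical obstacle is the bad-bad piece $T(B_f, B_g)$: because $f$ and $g$ carry two independent collections of Whitney cubes of unequal sizes, one must perform a case analysis comparing the diameters and relative positions of $Q_j$ and $Q_k'$, applying the smoothness estimate \eqref{CZK} in the appropriate variable to produce the necessary decay. This step is substantially more delicate than its linear counterpart, but once it is in place the rest of the argument is routine bookkeeping.
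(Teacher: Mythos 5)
The paper does not prove Theorem~B: it states it as a known result and cites Grafakos and Torres \cite{GT} for the proof. Your outline (two-parameter Calder\'on--Zygmund decomposition of $f$ and $g$ yielding weak $(1,1,1/2)$, multilinear Marcinkiewicz interpolation against the assumed strong $(p_0,q_0,r_0)$ bound to fill the diagonal, and $H^1$--$BMO$ duality with atoms for the $L^\infty\times L^\infty$ endpoint) is precisely the scheme of \cite{GT}, so you are reproducing the cited source's argument rather than departing from it.
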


\noi
Theorem~B assumes the boundedness $L^{p_0}\times L^{q_0}\to L^{r_0}$ of the operator $T$ for some H\"older triple $(p_0, q_0, r_0)$. Obtaining one such boundedness via appropriate cancelation conditions is another topic of interest in the theory of linear and multilinear operators with Calder\'on-Zygmund kernels. A satisfactory answer is provided by the $T(1)$ theorem; the following bilinear version, as stated by Hart \cite{H}, is equivalent to the formulation in \cite{GT} and is strongly influenced by the fundamental work of David and Journ\'e \cite{DJ} in the linear case.

\begin{theoremc*}
Let $T: \mathcal S\times\mathcal S\to \mathcal S'$ be a bilinear singular integral operator with Calder\'on-Zygmund kernel $K$. Then,  $T$ can be extended to a bounded operator from \(L^{p_0} \times L^{q_0}\) into \(L^{r_0}\) for some \(1<p_0, q_0<\infty\) and \(1/p_0+1/q_0=1/r_0 \leq 1\) if and only if
$T$ satisfies the following two conditions:
\begin{itemize}
\item[\textup{(i)}] $T$ has the weak boundedness property,
\item[\textup{(ii)}] $T(1, 1), T^{*1}(1, 1)$ and  $T^{*2}(1, 1)$ are in $BMO$.
\end{itemize}
\end{theoremc*}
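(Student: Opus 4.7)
My plan is to follow the template of the linear David--Journ\'e $T(1)$ theorem, adapted to the bilinear setting. The necessity direction is relatively routine: assuming boundedness $L^{p_0} \times L^{q_0} \to L^{r_0}$, the weak boundedness property is obtained by pairing $T(\varphi_B, \psi_B)$ against a third bump $\chi_B$ adapted to the same ball $B$ and applying H\"older. The conditions $T(1,1), T^{*j}(1,1) \in BMO$ are interpreted via duality: for a mean-zero atom $\chi$ supported in a ball $B$, the action of $T(1,1)$ on $\chi$ is defined by splitting $1 = \eta + (1-\eta)$ with $\eta$ a smooth cutoff equal to $1$ on $2B$, using boundedness to handle $T(\eta, \eta)$ and the kernel bounds \eqref{cz-size}--\eqref{CZK} together with the cancelation of $\chi$ to handle the remaining pieces; the resulting linear functional is $O(|B|^{1/2})$ on atoms, which gives the $BMO$ estimate via the atomic $H^1$-$BMO$ duality.

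For the sufficiency, I would carry out two reductions. First, I would construct three bilinear paraproducts $\Pi^{(0)}, \Pi^{(1)}, \Pi^{(2)}$ with prescribed symbols: $\Pi^{(0)}(1,1) = b_0 := T(1,1)$, $(\Pi^{(1)})^{*1}(1,1) = b_1 := T^{*1}(1,1)$, $(\Pi^{(2)})^{*2}(1,1) = b_2 := T^{*2}(1,1)$, with the other two symbols of each $\Pi^{(j)}$ vanishing. Using Littlewood--Paley pieces $Q_k = \psi(2^{-k}D)$ with $\psi$ supported away from the origin and a fattened bump $\varphi$ equal to $1$ on $\supp \psi$, one may set
\[
\Pi^{(0)}(f,g)(x) = \sum_{k \in \ent} Q_k\bigl(Q_k(b_0)\, \varphi(2^{-k}D)f \cdot \varphi(2^{-k}D)g\bigr)(x),
\]
and cyclic variants for $\Pi^{(1)}, \Pi^{(2)}$ with the outer $Q_k$ and one of the inner $\varphi(2^{-k}D)$ factors swapped. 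A direct computation of the Fourier integral representation confirms that each $\Pi^{(j)}$ has a bilinear \cz kernel, while Littlewood--Paley/$BMO$ duality gives the required $L^{p_0} \times L^{q_0} \to L^{r_0}$ bound. Then $\widetilde{T} := T - \Pi^{(0)} - \Pi^{(1)} - \Pi^{(2)}$ is still a bilinear singular integral operator satisfying the weak boundedness property, but now with all three distributions $\widetilde{T}(1,1), \widetilde{T}^{*1}(1,1), \widetilde{T}^{*2}(1,1)$ equal to zero.

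It then remains to prove $L^2 \times L^2 \to L^1$ boundedness of $\widetilde{T}$. Writing $\widetilde{T}(f,g) = \sum_{j,k,\ell} \Delta_j \widetilde{T}(\Delta_k f, \Delta_\ell g)$, the three vanishing symbol conditions together with the kernel smoothness \eqref{CZK} yield, for some $\eps>0$, an almost-orthogonality estimate of the schematic form
\[
\bigl\|\Delta_j \widetilde{T}(\Delta_k f, \Delta_\ell g)\bigr\|_{L^1} \lesssim 2^{-\eps(\max(j,k,\ell) - \med(j,k,\ell))} \|\Delta_k f\|_{L^2} \|\Delta_\ell g\|_{L^2},
\]
in which the three possible "high-frequency output/high-input" configurations are each absorbed by one of the vanishing symbol conditions, while the nearly diagonal regime $j\sim k\sim \ell$ is handled by the weak boundedness property applied to molecules of the same scale. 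Summing in $j,k,\ell$ and applying Cauchy--Schwarz in the Littlewood--Paley square function produces $\|\widetilde{T}(f,g)\|_{L^1} \lesssim \|f\|_{L^2}\|g\|_{L^2}$, and then Theorem~B extrapolates to the full range of H\"older exponents.

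The main obstacle is the paraproduct step: in the bilinear setting one must verify, simultaneously for all three paraproducts, that the ad hoc formulas above produce genuine bilinear \cz kernels in the sense of \eqref{cz-size}--\eqref{CZK}, that each $\Pi^{(j)}$ realizes the desired symbol $b_j$, and crucially that it contributes trivially to the other two symbols; getting the frequency supports to line up so that this three-way orthogonality holds is the most delicate bookkeeping in the argument, and it is the main place where the bilinear proof diverges substantially from the linear David--Journ\'e original.
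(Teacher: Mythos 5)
Theorem~C is not proved in this paper: it is quoted as a known result, with the authors pointing to Hart~\cite{H} for the formulation they use and to Grafakos--Torres~\cite{GT} for the original multilinear statement (both descendants of David--Journ\'e~\cite{DJ}). There is therefore no ``paper's own proof'' to compare your sketch against; the relevant comparison is with those cited sources.

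That said, your outline does follow the template used there: necessity via bump-pairing and $H^1$--$BMO$ duality, then paraproduct subtraction to reduce to the case $\widetilde T(1,1)=\widetilde T^{*1}(1,1)=\widetilde T^{*2}(1,1)=0$, then a Littlewood--Paley/almost-orthogonality argument with the weak boundedness property handling the diagonal, and finally extrapolation via Theorem~B. Two places in your sketch need to be firmed up before it is a proof rather than a plan. First, the claimed decay $2^{-\eps(\max(j,k,\ell)-\med(j,k,\ell))}$ does not by itself make the triple sum over Littlewood--Paley indices converge: with three free indices you need either decay in $\max-\min$ or an explicit case analysis ($j\lesssim k\sim\ell$, $k\lesssim j\sim\ell$, $\ell\lesssim j\sim k$, $j\sim k\sim\ell$) in which the ``free'' index is absorbed into a square function; you gesture at this with Cauchy--Schwarz, but the bookkeeping is exactly where the argument lives. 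Second, the paraproducts $\Pi^{(j)}$ must be shown to have genuine bilinear Calder\'on--Zygmund kernels satisfying \eqref{cz-size}--\eqref{CZK} and to be bounded on some H\"older triple (so that Theorem~B applies to them too); this requires a concrete verification, including the normalization $\sum_k Q_k^2=I$ (or a Calder\'on reproducing formula $\sum_k\widetilde Q_kQ_k=I$) so that $\Pi^{(0)}(1,1)$ really equals $b_0$ modulo constants, and a check that the frequency supports guarantee $(\Pi^{(0)})^{*1}(1,1)=(\Pi^{(0)})^{*2}(1,1)=0$. These are standard but nontrivial, and since the present paper deliberately outsources all of this to~\cite{H} and~\cite{GT}, you should do the same unless your goal is to reprove the bilinear $T(1)$ theorem from scratch.
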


\noi
For the definition of the weak boundedness property, see Subsection \ref{subsec:5}.

Now, we turn our attention to the relation between bilinear \cz operators
and bilinear pseudodifferential operators.
A bilinear pseudodifferential operator $T_\sigma$ with a symbol $\s$,
a priori defined from $\mathcal{S} \times \mathcal{S}$
into $\mathcal{S}'$, is given by
%
\begin{equation}
T_\sigma(f,g)(x)=\int_{\rn}\int_{\rn} \sigma (x, \xi, \eta )\widehat f(\xi )\widehat g(\eta )
e^{ix\cdot(\xi + \eta )}d\xi d\eta.
\label{T2}
\end{equation}

\noi
We say that a symbol $\s$ belongs the
bilinear class $BS^m_{\rho, \dl}$
if 
\begin{equation}
\label{bi-pseudo} |\partial _x^\alpha\partial _\xi ^\beta\partial _\eta
^\gamma \sigma (x, \xi ,\eta )|\lesssim \big(1+|\xi
|+|\eta |\big) ^{m+\delta |\alpha |-\rho (|\beta |+|\gamma |)}
\end{equation}
for all \((x, \xi, \eta )\in {\re}^{3n}\) and all multi-indices
\(\alpha,\) \(\beta\) and \(\gamma\).
Such symbols are commonly referred to as
\emph{bilinear H\"ormander pseudodifferential symbols}. The collection of bilinear pseudodifferential operators with symbols in \(BS_{\rho, \delta}^m\) will be denoted by \(\mathcal{O}p \, BS^m_{\rho, \delta}\). Note that, for example,
 operators in \(\mathcal{O}p \, BS^m_{\rho, \delta}\) model  the product of two functions and their derivatives.

It is a known fact that bilinear \cz kernels correspond to bilinear pseudodifferential symbols in the class \(BS_{1, 1}^0\), see \cite{GT}. Moreover, \cz operators are ``essentially the same'' as pseudodifferential operators with symbols in the subclass \(BS_{1, \delta}^0\), \(0\leq\delta<1\), a fact that in turn is tightly connected to the existence of a symbolic calculus for \(BS_{1, \delta}^0\), see \cite{BMNT}.

\begin{theoremd*}
Let $\sigma\in BS_{1, \delta}^0$, $0\leq\delta<1$. Then,  $T_{\sigma}^{*j}=T_{\sigma^{*j}}$ with $\sigma^{*j}\in BS_{1, \delta}^0$, $j=1, 2$, and $T_\sigma$ is a bilinear Calder\'on-Zygmund operator.
\end{theoremd*}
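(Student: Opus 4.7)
The plan is to verify the three conditions in the definition of a bilinear \cz operator: that the kernel $K$ of $T_\sigma$ satisfies the size bound \eqref{cz-size} and gradient bound \eqref{CZK}; that the same holds for the transpose kernels $K^{*1}, K^{*2}$; and that $T_\sigma$ extends to a bounded map from some $L^{p_0}\times L^{q_0}$ into $L^{r_0}$ for a H\"older triple $(p_0,q_0,r_0)$.

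First, I would realize the kernel as the oscillatory integral $K(x,y,z)=\iint \sigma(x,\xi,\eta) e^{i(x-y)\cdot\xi+i(x-z)\cdot\eta}d\xi d\eta$ and apply a Littlewood--Paley decomposition $\sigma=\sum_{j\geq 0}\sigma_j$ with $\sigma_j$ localized at $|\xi|+|\eta|\sim 2^j$. The hypothesis $\rho=1$ gives $|\partial_\xi^\beta\partial_\eta^\gamma\sigma_j|\lesssim 2^{-j(|\beta|+|\gamma|)}$, so iterating the vector fields $L_\xi=(x-y)\cdot\nabla_\xi/(i|x-y|^2)$ and $L_\eta=(x-z)\cdot\nabla_\eta/(i|x-z|^2)$ (which preserve the exponential) yields
\[
|K_j(x,y,z)|\lesssim 2^{2jn}(1+2^j|x-y|)^{-N_1}(1+2^j|x-z|)^{-N_2}
\]
for arbitrary $N_1,N_2\geq 0$. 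Choosing $N_1,N_2$ sufficiently large and summing the resulting geometric series in $j$ produces $|K(x,y,z)|\lesssim(|x-y|+|x-z|)^{-2n}$. For the gradient estimate, each of $\partial_y,\partial_z,\partial_x$ either pulls down a factor of $\xi$, $\eta$, or $\xi+\eta$ from the phase, or, in the case of $\partial_x$, differentiates $\sigma$ to yield a symbol in $BS_{1,\delta}^\delta$. In all cases, the resulting symbol lies in $BS_{1,\delta}^1$, and the same dyadic argument with one fewer power of decay per block yields $|\nabla K(x,y,z)|\lesssim (|x-y|+|x-z|)^{-2n-1}$.

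Next, I would invoke the bilinear symbolic calculus for $BS_{1,\delta}^0$ with $0\leq\delta<1$, as developed in \cite{BMNT}. This produces asymptotic expansions for $\sigma^{*1}$ and $\sigma^{*2}$ whose terms belong to $BS_{1,\delta}^{-(1-\delta)|\alpha|}$, so that $\sigma^{*j}\in BS_{1,\delta}^0$ and $T_\sigma^{*j}=T_{\sigma^{*j}}$. Applying the kernel analysis of the first step to $\sigma^{*1}$ and $\sigma^{*2}$ then gives \eqref{cz-size} and \eqref{CZK} for $K^{*1},K^{*2}$. Finally, for a single H\"older-triple boundedness, I would appeal directly to the known $L^p\times L^q\to L^r$ boundedness of operators with symbols in $BS_{1,\delta}^0$, $\delta<1$, proved in \cite{BMNT, BBMNT}. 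Collecting these facts places $T_\sigma$ in the bilinear \cz class.

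The principal obstacle is the verification of \eqref{CZK}, since $x$-differentiation of $\sigma$ introduces a growth of order $\delta$ in the frequency variables; the argument succeeds only because the gain $\rho-\delta=1-\delta>0$ per integration by parts in $\xi$ or $\eta$ compensates this growth. This is precisely why the hypothesis $\delta<1$ is imposed. The symbolic calculus for transposes likewise fails at $\delta=1$, which is consistent with the well-known fact that $BS_{1,1}^0$ is strictly larger than the bilinear \cz class.
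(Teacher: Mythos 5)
The paper does not prove Theorem~D; it is recorded as a known fact, with citations to \cite{GT} for the relation between bilinear \cz kernels and $BS_{1,1}^0$ symbols and to \cite{BMNT} for the symbolic calculus of transposes. Your Littlewood--Paley argument for the kernel bounds \eqref{cz-size} and \eqref{CZK}, together with the reliance on \cite{BMNT} for $\sigma^{*j}\in BS_{1,\delta}^0$ and $T_\sigma^{*j}=T_{\sigma^{*j}}$, correctly reconstructs the standard proof in those references, and you have rightly identified the gain $\rho-\delta>0$ per $\xi$- or $\eta$-integration by parts as what offsets the $\delta$-loss from $\partial_x\sigma$ in the gradient estimate.

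The one step that does not work as stated is the last one: obtaining a single $L^{p_0}\times L^{q_0}\to L^{r_0}$ bound by appealing to \cite{BBMNT}. When $\rho=1$, the critical order in Theorem~G is $m(p,q)=n(\rho-1)(\cdots)=0$, so the condition $m<m(p,q)$ excludes $m=0$; indeed the whole point of the present paper is that $BS_{1,0}^m$ with $m\geq 0$ lies outside the reach of such Calder\'on--Vaillancourt-type results. The required H\"older-triple boundedness must instead be obtained via the bilinear $T(1)$ theorem (Theorem~C): having established the kernel bounds and the transpose calculus, one checks the weak boundedness property directly from the symbol estimates and observes that $T_\sigma(1,1)=\sigma(x,0,0)\in L^\infty\subset BMO$, with the analogous statements for $T_\sigma^{*1}(1,1)$ and $T_\sigma^{*2}(1,1)$ following from the expansions of $\sigma^{*1},\sigma^{*2}$. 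This is precisely the route in \cite{BMNT}, and it is exactly the template that Subsections~\ref{subsec:4}--\ref{subsec:5} of the present paper adapt to the commutator setting. With that replacement your sketch is sound.
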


\noi
Thus, we can view bilinear \cz operators on the frequency side as operators given by
\eqref{T2}
with symbols \(\sigma \in BS^m_{\rho, \dl}\), where \(\rho=1, 0\leq \delta<1\) and \(m=0\).

Our main interest is to consider  the previously defined bilinear operators under the additional operation of commutation. For a bilinear operator \(T\), and  (multiplicative) functions $b, b_1,$ and $ b_2$ , we consider the following three \emph{bilinear commutators}:
\begin{align*}
[T, b]_1(f, g)&= T(bf,g)-bT(f,g),\\
[T, b]_2 (f, g)&=T(f,bg)-bT(f,g),\\
[[T, b_1]_{_1}, b_2]_2 (f, g)&= [T, b_1]_{_1}(f,b_2g) - b_2[T, b_1]_{_1}(f,g).
\end{align*}

\noi
First, we consider the case when  \(T\) is a bilinear \cz operator with kernel \(K\) and
 \(b, b_1, b_2\) belong to \(BMO(\rn)\). Then, the three bilinear commutators can formally be written as
\begin{align*}\label{formally}
[T, b]_1(f, g)(x)&=\int_{\rn}\int_{\rn} K(x, y, z)\big(b(y)-b(x)\big)f(y)g(z)\, dydz,\\
[T, b]_2 (f, g)(x)&=\int_{\rn}\int_{\rn} K(x, y, z)\big(b(z)-b(x)\big)f(y)g(z)\, dydz,\\
[[T, b_1]_{_1}, b_2]_2 (f, g)(x)&=\int_{\rn}\int_{\rn}  K(x, y, z)\big(b_1(y)-b_1(x)\big)
\big(b_2(z)-b_2(x)\big)f(y)g(z)\, dydz.
\end{align*}
As in the linear case, these operators are bounded from \(L^p\times L^q\rightarrow L^r\) with \(1/p+1/q=1/r\) for all \(1<p,q<\infty\),  see Grafakos and Torres \cite{GT1}, Perez and Torres \cite{PT}, Perez et
al.~\cite{PPTT} and Tang \cite{Tan}, with estimates of the form
\begin{align*}
&
\big\|[T, b]_1(f, g)\big\|_{L^r},  \|[T, b]_2(f, g)\|_{L^r}\lesssim \|b\|_{BMO}\|f\|_{L^p}\|g\|_{L^q},\\
& \vphantom{\Big|}
\big\|[[T, b_1]_{_1}, b_2]_2 (f, g)\big\|_{L^r}\lesssim \|b_1\|_{BMO}\|b_2\|_{BMO}\|f\|_{L^p}\|g\|_{L^q}.
\end{align*}
However, the bilinear commutators obey a ``smoothing effect" and are, in fact, even better behaved if we allow the symbols $b$ to be slightly smoother. The following theorem of B\'enyi and Torres \cite{BT1}, should be regarded as the bilinear counterpart of the result of Uchiyama \cite{Uch}
mentioned before.

\begin{theoreme*}
Let \(T\) be a bilinear \cz operator. If \(b\in CMO\),  \(1/p+1/q=1/r\), \(1<p,q<\infty\) and  \(1\leq r <\infty\), then \([T, b]_1: L^p\times L^q\to L^r\) is a bilinear compact operator. Similarly,  if \(b_1, b_2\in CMO\), then \([T,b_2]_2\)  and \([[T, b_1]_{_1}, b_2]_2\) are bilinear compact operators for the same range of exponents.
\end{theoreme*}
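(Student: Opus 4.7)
The plan is to adapt Uchiyama's classical strategy to the bilinear setting: approximate $CMO$ symbols by $C_c^\infty$ functions in the $BMO$ norm, verify compactness for smooth compactly supported symbols, and then pass to the limit. The limiting step is formal, using two standard facts: bilinear compactness $L^p \times L^q \to L^r$ is preserved under operator-norm limits (the linear argument carries over verbatim), and the a priori boundedness
\[\big\|[T,b]_1\big\|_{L^p \times L^q \to L^r} \lesssim \|b\|_{BMO}\]
recalled just before the statement depends linearly on $b$. Since $CMO$ is by definition the $BMO$-closure of $C_c^\infty$, we may therefore reduce to the case $b \in C_c^\infty$.

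For smooth compactly supported $b$, we invoke a bilinear analogue of the Fr\'echet--Kolmogorov compactness criterion, as developed in \cite{BT1}: to see that $[T,b]_1 : L^p \times L^q \to L^r$ sends unit balls to totally bounded subsets of $L^r$, it suffices to verify, uniformly over $\|f\|_{L^p}, \|g\|_{L^q}\leq 1$, equicontinuity in translation
\[\big\|\tau_h [T,b]_1(f,g) - [T,b]_1(f,g)\big\|_{L^r} \to 0 \quad\text{as } |h|\to 0,\]
and uniform decay at infinity
\[\big\|\chi_{\{|x|>R\}}[T,b]_1(f,g)\big\|_{L^r}\to 0 \quad\text{as } R\to \infty.\]
The key analytic input is that the commutator kernel $K(x,y,z)(b(y)-b(x))$ is substantially better than $K$ itself: the mean value theorem improves the order of singularity from $-2n$ to $-2n+1$ (with constant $\|\nabla b\|_{L^\infty}$), while compactness of $\supp b$ forces the kernel to vanish unless $y$ lies in a fixed compact set. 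Together, these gains reduce each of the two compactness conditions to routine estimates that mimic the linear Uchiyama argument.

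The $[T,b]_2$ case is handled identically after swapping the roles of the second and third spatial variables (equivalently, by working with the transpose kernel $K^{*2}$). For the iterated commutator $[[T,b_1]_1,b_2]_2$, the same scheme applies: when $b_1, b_2 \in C_c^\infty$, the iterated kernel $K(x,y,z)(b_1(y)-b_1(x))(b_2(z)-b_2(x))$ enjoys a double smoothing and is spatially localized in both $y$ and $z$, so the Fr\'echet--Kolmogorov conditions follow even more readily. Passage from smooth $b_1, b_2$ to $b_1, b_2 \in CMO$ uses the double bound
\[\big\|[[T,b_1]_1,b_2]_2\big\|_{L^p\times L^q\to L^r}\lesssim \|b_1\|_{BMO}\|b_2\|_{BMO}\]
and approximation of both symbols simultaneously in $BMO$ by $C_c^\infty$ functions.

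The main technical obstacle is verifying the uniform decay condition. Unlike in the linear case, a bilinear singular integral typically does not decay at infinity even when its inputs are compactly supported, since $x$ can be far from $y$ while remaining close to $z$ (or vice versa). The compact support of the smooth approximant $b$ is essential here: unless $y\in \supp b$, the commutator kernel vanishes, localizing one input variable and enabling the necessary $L^r$ decay. This is precisely the step where Lipschitz regularity of $b$ alone would not suffice and where $CMO$ enters crucially through its characterization as the $BMO$-closure of $C_c^\infty$.
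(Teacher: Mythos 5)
This statement, Theorem~E, is not proved in the present paper; it is quoted verbatim from B\'enyi and Torres \cite{BT1} as a black box to be combined with Theorem~\ref{main1}. There is therefore no ``paper's own proof'' to compare against, and your proposal must be judged against the argument in \cite{BT1} itself.

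Your outline does in fact reproduce the strategy of \cite{BT1}: reduce to $b\in C_c^\infty$ using the density of $C_c^\infty$ in $CMO$, the linearity $b\mapsto [T,b]_i$, the a priori $BMO$-bound for the commutator, and stability of bilinear compactness under operator-norm limits; then invoke the bilinear Fr\'echet--Kolmogorov--Riesz criterion established in that paper, verifying equicontinuity under translation and decay at infinity (plus the uniform boundedness which you leave implicit, correctly, since it is already known). The key kernel gains you cite --- the improvement of the singularity order from $-2n$ to $-2n+1$ (resp.\ $-2n+2$ for the iterated commutator) via the mean value theorem, and the spatial localization forced by $\supp b$ --- are precisely what makes the two conditions checkable, as in \cite{BT1}.

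One imprecision worth flagging: you assert that ``unless $y\in\supp b$, the commutator kernel vanishes.'' This is false as stated --- the kernel of $[T,b]_1$ is $K(x,y,z)\bigl(b(y)-b(x)\bigr)$, which for $y\notin\supp b$ equals $-K(x,y,z)\,b(x)$ and need not vanish. What saves the argument is that the decay condition is only tested on the region $|x|>R$, and for $R$ large enough that $B_0(R)^c\cap\supp b=\emptyset$ one has $b(x)=0$ there, so the kernel reduces to $K(x,y,z)b(y)$ and the localization in $y$ does hold. You should state explicitly that the vanishing of $b(x)$ in the far region is what triggers the localization; as written, the claim would also (incorrectly) suggest spatial localization for the equicontinuity estimate, where $x$ ranges over all of $\R^n$ and one must instead rely on the improved singularity order together with a near/far splitting of the kernel. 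With that correction, the proposal is a faithful sketch of the argument in \cite{BT1}.
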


\noi
Interestingly, the notion of compactness in the multilinear setting alluded to in Theorem~E can be traced back to the foundational article of Calder\'on \cite{Cal3}. Given three normed spaces \(X, Y, Z\), a bilinear operator \(T: X\times Y\to Z\) is called \emph{(jointly) compact} if the set \(\{T(x, y): \|x\|, \|y\|\leq 1\}\) is precompact in \(Z\). Clearly, any compact bilinear operator \(T\) is continuous;
 for further connections between this and other notions of compactness, see again \cite{BT1}. An immediate consequence of Theorems~D and~E is the following compactness result for commutators of bilinear pseudodifferential operators.

\begin{corollaryf*}
Let \(\sigma\in BS_{1, \delta}^0\), \(0\leq\delta<1\), and \(b, b_1, b_2\in CMO\).
Then, \([T_\sigma, b]_i, i=1,2,\) and \([[T_\sigma, b_1]_{_1}, b_2]_2\) are bilinear compact operators from \(L^p\times L^q\to L^r\) for \(1/p+1/q=1/r, 1<p,q<\infty\) and  \(1\leq r <\infty\).
\end{corollaryf*}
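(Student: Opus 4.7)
The plan is very short because, as the text preceding the statement advertises, Corollary~F is meant to be an immediate consequence of Theorems~D and~E; the job is just to chain the two results together with care about hypotheses and the admissible range of exponents.

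First, I would invoke Theorem~D to convert the hypothesis on the symbol into a hypothesis on the operator. Since $\sigma \in BS_{1,\delta}^0$ with $0 \leq \delta < 1$, Theorem~D gives that $T_\sigma$ is a bilinear Calder\'on-Zygmund operator. Crucially, Theorem~D also asserts that the transpose symbols $\sigma^{*1}$ and $\sigma^{*2}$ lie in the same class $BS_{1,\delta}^0$, so $T_{\sigma}^{*1}$ and $T_\sigma^{*2}$ are themselves bilinear Calder\'on-Zygmund operators. This symmetry is what allows one to treat $[T_\sigma, b]_2$ on the same footing as $[T_\sigma, b]_1$, since the second-entry commutator is the first-entry commutator of a suitable transpose operator.

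Next, I would apply Theorem~E directly. With $T = T_\sigma$ a bilinear Calder\'on-Zygmund operator and $b, b_1, b_2 \in CMO$, Theorem~E gives that $[T_\sigma, b]_1$, $[T_\sigma, b]_2$, and $[[T_\sigma, b_1]_1, b_2]_2$ are all jointly compact as bilinear operators $L^p \times L^q \to L^r$ for the full range $1/p + 1/q = 1/r$, $1 < p, q < \infty$, $1 \leq r < \infty$. This yields exactly the conclusion of Corollary~F.

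There is essentially no genuine obstacle here; the only thing I would double-check is that the transpose-symbol part of Theorem~D is being used correctly to legitimize the second commutator $[T_\sigma, b]_2$ and the iterated commutator $[[T_\sigma, b_1]_1, b_2]_2$ under Theorem~E's framework, and that the exponent ranges in Theorems~D and~E are compatible with those claimed in the corollary, which they are. Accordingly, the proof collapses to a one-line citation of the two theorems once the transpose remark is in place.
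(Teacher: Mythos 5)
Your proposal is correct and matches the paper's (implicit, one-line) argument: Theorem~D converts the symbol hypothesis $\sigma\in BS_{1,\delta}^0$ into the statement that $T_\sigma$ is a bilinear Calder\'on-Zygmund operator, and Theorem~E then yields compactness of all three commutators on the stated exponent range. The detour through the transpose part of Theorem~D is harmless but unnecessary, since Theorem~E as stated already covers $[T,b]_2$ and the iterated commutator directly for any bilinear Calder\'on-Zygmund $T$.
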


Varying the parameters \(\rho, \delta\) and \( m\) in the definition of the bilinear H\"ormander classes \(BS_{\rho, \delta}^m\) 
is a way of escaping the realm of bilinear Calder\'on-Zygmund theory. In this context, it is useful to recall the following statement from \cite{BBMNT}.

\begin{theoremg*}
Let \(0\leq \delta\leq \rho\leq 1,\) \(\delta<1,\) \(1\leq p, q\leq
\infty,\) \(0<r<\infty\) be such that \(1/p+1/q=1/r\),
\[
m<m(p, q):=n(\rho -1)\left(\max\Big(\frac{1}{2},\,\frac{1}{p},\, \frac{1}{q},\, 1-\frac{1}{r}\Big)+\max\Big(\frac{1}{r}-1,0\Big)\right),
\]
and \(\sigma\in BS^m_{\rho,\delta}(\rn).\)
Then,  \(T_\sigma\) extends to a bounded operator from  \(L^p\times L^q\to L^r\).
\end{theoremg*}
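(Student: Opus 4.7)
The plan is to combine a Littlewood--Paley decomposition of $\s$ in the frequency variables $(\xi,\eta)$ with sharp end-point estimates at a handful of extremal exponents, followed by bilinear interpolation. Fix $\psi\in C^\infty_c(\rtn)$ supported in $\tfrac{1}{2} \leq \abs{\xi}+\abs{\eta}\leq 2$, write $\psi_j(\xi,\eta) = \psi(2^{-j}\xi,2^{-j}\eta)$ for $j\geq 1$, adjoin a low-frequency $\psi_0$, and set $\s_j = \s \cdot \psi_j$. Then $T_\s = \sum_{j\geq 0} T_{\s_j}$, and the task reduces to bounding $\norm{T_{\s_j}(f,g)}{L^r}$ with enough geometric decay in $j$ to be summable under the hypothesis $m < m(p,q)$.

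Next, I would reduce each $T_{\s_j}$ to \emph{elementary tensor pieces} via a Fourier series on a box of sidelength $\sim 2^{j\rho}$ in $(\xi,\eta)$, producing a representation
\begin{equation*}
\s_j(x,\xi,\eta) = 2^{jm} \sum_{k,\el \in \ent^n} c^j_{k,\el}(x)\,\phi(2^{-j\rho}\xi - k)\,\phi(2^{-j\rho}\eta - \el),
\end{equation*}
where $\phi$ is a fixed Schwartz bump and the coefficients $c^j_{k,\el}$ inherit from \eqref{bi-pseudo} the estimates $\abs{\dd_x^\al c^j_{k,\el}(x)} \les 2^{j\dl\absa}\jb{k}^{-N}\jb{\el}^{-N}$ for every $N$. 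This expresses $T_{\s_j}(f,g)(x)$, up to the rapidly summable coefficients $c^j_{k,\el}(x)$, as a superposition of pointwise products $2^{jm}\,c^j_{k,\el}(x)\,(\Phi^j_k * f)(x)\,(\Phi^j_\el * g)(x)$ whose factors are frequency-localized near $2^{j\rho}k$ and $2^{j\rho}\el$.

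The heart of the argument is then a collection of end-point bounds of the schematic form $\norm{T_{\s_j}(f,g)}{L^r} \les 2^{jm + jn(1-\rho)E(p,q,r)}\norm{f}{L^p}\norm{g}{L^q}$, where $E(p,q,r) = \max(\tfrac{1}{2}, \tfrac{1}{p}, \tfrac{1}{q}, 1 - \tfrac{1}{r}) + \max(\tfrac{1}{r} - 1, 0)$. The $\tfrac{1}{2}$ contribution reflects a bilinear Plancherel-type bound when $p=q=2$; losses of $\tfrac{1}{p}$ or $\tfrac{1}{q}$ arise from Bernstein's inequality applied to functions whose frequency support has volume $\sim 2^{jn\rho}$ rather than $2^{jn}$; the $1 - \tfrac{1}{r}$ contribution is recovered by duality, using that the transposes $T_\s^{*i}$ have symbols in $\bs$ modulo lower-order remainders; and the $\tfrac{1}{r} - 1$ contribution handles the quasi-Banach regime $r<1$ via Hardy-space/atomic techniques. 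Summing a geometric series in $j$ converges precisely when $m < m(p,q)$, and bilinear complex interpolation in the spirit of \cite{GT} fills in the intermediate exponents to yield the full range.

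The main obstacle is obtaining each of these end-point estimates with the sharp dependence on $j$ in the bilinear setting, where the interaction between the $x$-modulation at scale $2^{j\dl}$ and the frequency-localization at scale $2^{j\rho}$ must be carefully tracked --- the hypothesis $\dl \leq \rho$ is precisely what keeps these scales compatible and allows the $x$-derivatives of $c^j_{k,\el}$ to be absorbed into the Schwartz tails in $k,\el$. Establishing the Hardy-space end-points for $r < 1$ and the $L^\infty\times L^\infty \to BMO$ bound (which powers the $\tfrac{1}{2}$ factor after duality) are the most delicate steps, requiring a careful analysis of the bilinear kernel associated to each elementary tensor piece and, for the $BMO$ output, the kind of cancellation exploited in Theorem~C.
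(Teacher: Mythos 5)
The paper does not prove Theorem G: it is quoted from B\'enyi--Bernicot--Maldonado--Naibo--Torres \cite{BBMNT} without proof, so there is no in-paper argument to compare yours against. Your outline does track the strategy used in that reference and in the Coifman--Meyer ``elementary symbol'' tradition: a Littlewood--Paley decomposition of $\s$ in the frequency variables, a reduction of each annular piece to separable pieces, sharp endpoint estimates at the extremal H\"older triples $(2,2,1)$, $(p,\infty,p)$, $(\infty,q,q)$, $(\infty,\infty,\infty)$ and the Hardy-space endpoints for $r<1$, followed by interpolation; the geometric summability in $j$ under $m<m(p,q)$ is also set up correctly.

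However, the specific elementary decomposition you wrote is internally inconsistent. With $\phi$ a fixed bump and frequency pieces $\phi(2^{-j\rho}\xi-k)$, the lattice points $(k,\el)$ that can meet the annulus $\abs{\xi}+\abs{\eta}\sim 2^j$ all satisfy $\abs{k}+\abs{\el}\sim 2^{j(1-\rho)}$, so a decay $\jb{k}^{-N}\jb{\el}^{-N}$ for every $N$ would force those contributions to vanish as $j\to\infty$ whenever $\rho<1$ --- contradicting the fact that $\s_j$ has size $\sim 2^{jm}$ there. What is actually needed is either a count of $\sim 2^{jn(1-\rho)}$ non-negligible pieces in each of $k$ and $\el$ (with rapid decay only past that scale), plus a further Fourier-series (modulation) expansion to separate the residual $(\xi,\eta)$-dependence into a true tensor, or a single Fourier series on a box of side $\sim 2^j$ with coefficients of size $2^{jm}\bigl(2^{j(1-\rho)}/\jb{k}\bigr)^N\bigl(2^{j(1-\rho)}/\jb{\el}\bigr)^N$. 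That count is precisely the source of the $2^{jn(1-\rho)E(p,q,r)}$ loss you quote, so getting it right is essential for the endpoint bookkeeping to close. Beyond this, each endpoint estimate (the $L^\infty\times L^\infty\to BMO$ and Hardy-space endpoints especially) is a substantial lemma on its own, and passing from the endpoints to the full range requires bilinear interpolation adapted to the $BMO$ and Hardy scales, not a routine Riesz--Thorin step. As written this is a plausible roadmap rather than a proof, and for the purposes of this paper it is in any case unnecessary, since the result is imported from \cite{BBMNT}.
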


\noi
 See also Miyachi and Tomita \cite{MT} for the optimality of the order $m$ and the extension of the result in \cite{BBMNT} below $r=1$.

Clearly, the class \(BS_{1, 0}^1\) falls outside the scope of Theorem~F; since \(\rho=1\), the only way to make the class \(BS_{1, \delta}^m\), \(0\leq \delta<1\), to produce operators that are bounded is to require the order \(m < 0\). However, guided by the experience we gained in the linear case, it is natural to hope that the phenomenon of smoothing of bilinear commutators manifests itself again in the bilinear context of pseudodifferential operators. This is confirmed by our main results, Theorem~\ref{main1} and Theorem~\ref{main2}, which we now state.

\begin{theorem}
\label{main1}
Let  \(T_\sigma\in \mathcal{O}p \, BS^1_{1, 0}\) and $a$ be a Lipschitz function such that \(\nabla a\in L^\infty\).
Then,  \([T_\sigma, a]_i, i=1, 2,\) are bilinear \cz operators. In particular,  \([T_\sigma, a]_i, i=1, 2,\) are bounded  from \(L^p\times L^q\to L^r\) for \(1/p+1/q=1/r, 1<p,q<\infty\) and  \(1\leq r <\infty\).
\end{theorem}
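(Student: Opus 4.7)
The plan is to invoke Theorem~B by establishing, for $i=1,2$: (a) that the kernel of $[T_\sigma,a]_i$ satisfies the bilinear \cz conditions \eqref{cz-size} and \eqref{CZK} (together with the corresponding estimates on the transpose kernels), and (b) the boundedness of $[T_\sigma,a]_i$ on at least one admissible H\"older triple $L^{p_0}\times L^{q_0}\to L^{r_0}$. I focus on $[T_\sigma,a]_1$; the argument for $[T_\sigma,a]_2$ is symmetric, using derivatives of $\sigma$ in $\eta$ instead of $\xi$ and the factor $a(z)-a(x)$ in place of $a(y)-a(x)$.

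For (a), recall the distributional kernel
\begin{equation*}
K_\sigma(x,y,z)=\iint\sigma(x,\xi,\eta)\,e^{i(x-y)\cdot\xi+i(x-z)\cdot\eta}\,d\xi\,d\eta.
\end{equation*}
Standard integration by parts against the symbol bounds \eqref{bi-pseudo} with $\rho=1$, $\delta=0$, $m=1$ yields
\begin{equation*}
\abs{\partial_x^\alpha\partial_y^\beta\partial_z^\gamma K_\sigma(x,y,z)}\les(\abs{x-y}+\abs{x-z})^{-(2n+1+\absa+\absb+\absg)}
\end{equation*}
off the diagonal. The kernel of $[T_\sigma,a]_1$ is $K(x,y,z)=(a(y)-a(x))K_\sigma(x,y,z)$; the Lipschitz bound $\abs{a(y)-a(x)}\le\norm{\nabla a}{L^\infty}\abs{x-y}$ precisely absorbs the extra singularity coming from $m=1$ and yields \eqref{cz-size}. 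Applying $\nabla_x$, $\nabla_y$, or $\nabla_z$ and distributing via the product rule yields \eqref{CZK}: when the derivative lands on $a(y)-a(x)$ one uses $\nabla a\in L^\infty$, while otherwise the extra $\abs{x-y}$ factor balances the improved derivative bound on $K_\sigma$. The same estimates hold for the transpose kernels by symmetry of the argument in the spatial variables.

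For (b), I plan to derive the symbol-level identity
\begin{equation*}
[T_\sigma,a]_1(f,g)(x)=-i\sum_{k=1}^n\int_0^1\iiint(\partial_{\xi_k}\sigma)(x,\xi+t\zeta,\eta)\,\widehat{\partial_k a}(\zeta)\,\hf(\xi)\,\hg(\eta)\,e^{ix\cdot(\xi+\zeta+\eta)}\,d\zeta\,d\xi\,d\eta\,dt,
\end{equation*}
obtained by expressing $af$ and $a(x)$ on the Fourier side and applying the fundamental theorem of calculus to the difference $\sigma(x,\xi+\zeta,\eta)-\sigma(x,\xi,\eta)$. The key structural feature is that, since $\sigma\in BS^1_{1,0}$, the differentiated symbol $\partial_{\xi_k}\sigma$ belongs to $BS^0_{1,0}$, so $T_{\partial_{\xi_k}\sigma}$ is itself a bilinear \cz operator by Theorem~D.

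At the endpoints of the $t$-integration the inner trilinear expression collapses neatly: at $t=0$ to $(\partial_k a)(x)\,T_{\partial_{\xi_k}\sigma}(f,g)(x)$, and at $t=1$, via the change of variable $\xi\mapsto\xi-\zeta$, to $T_{\partial_{\xi_k}\sigma}((\partial_k a)f,g)(x)$. Both endpoint expressions are bounded on every admissible H\"older triple because $\partial_{\xi_k}\sigma\in BS^0_{1,0}$ and $\nabla a\in L^\infty$. The main technical obstacle is to establish an analogous bound \emph{uniformly in $t\in(0,1)$}: the argument $\xi+t\zeta$ of the symbol obstructs a direct appeal to any standard trilinear H\"ormander class. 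I would tackle this via a Littlewood--Paley decomposition in the $\xi$, $\eta$, and $\zeta$ variables together with an almost-orthogonality estimate, exploiting the order-zero decay of $\partial_{\xi_k}\sigma$ in its frequency arguments to sum the pieces. Once this single H\"older-triple bound is secured, Theorem~B promotes the boundedness to the full range $1<p,q<\infty$, $1/p+1/q=1/r$, $1\le r<\infty$, as claimed.
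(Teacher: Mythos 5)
Your part (a) parallels the paper's Lemmas~\ref{L00} and~\ref{L0} (the paper also carries out a truncation $\sigma_N=\sigma\psi_N$ to justify the oscillatory integral, which you elide, but the pointwise estimates are the same). The genuine gap is in part (b). You correctly identify the obstacle: the shifted symbol $(\partial_{\xi_k}\sigma)(x,\xi+t\zeta,\eta)$ decays in $(1+|\xi+t\zeta|+|\eta|)$, which fails to control $(1+|\xi|+|\eta|+|\zeta|)$ when $\xi+t\zeta$ is small while $\xi,\zeta$ are large, so it lies in no standard trilinear H\"ormander class uniformly in $t\in(0,1)$. But ``Littlewood--Paley in all three frequency variables plus an almost-orthogonality estimate'' is a plan, not an argument, and it is precisely the step carrying all the analytic content; as written the proof is incomplete. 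A secondary issue: $a$ is only Lipschitz, so $\hat a$ and $\widehat{\partial_k a}$ are merely tempered distributions, and your representation treats $\widehat{\partial_k a}(\zeta)$ as an integrable density; a truncation and limiting argument is needed even to make the formula meaningful.

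The paper avoids estimating any trilinear object. After the kernel estimates, it invokes the bilinear $T(1)$ theorem (Theorem~C) and verifies (i) that $[T,a]_j(1,1)$ and the transposes applied to $(1,1)$ lie in $BMO$, and (ii) the bilinear weak boundedness property. The engine is a frequency-side FTC applied to $\sigma(x,\xi,\eta)-\sigma(x,0,0)$ (not to $a$), giving $\sigma=\sum_j(\xi_j\sigma_j+\eta_j\wt\sigma_j)$ with $\sigma_j,\wt\sigma_j\in BS^0_{1,0}$, hence $T(f,g)=\sum_j[T_j(D_jf,g)+\wt T_j(f,D_jg)]$. Then $[T,a]_1(1,1)=\sum_j T_j(D_ja,1)\in BMO$ since each $T_j$ is bilinear \cz by Theorem~D and $D_ja\in L^\infty$; the transposes reduce to commutators of $T^{*1},T^{*2}\in\mathcal{O}p\,BS^1_{1,0}$ by the algebraic identities of Lemma~\ref{L2}; and the weak boundedness property follows from the same representation together with $\|a\|_{L^\infty(B_{x_0}(t))}\les t\|\nabla a\|_{L^\infty}$. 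Replacing your Fourier-side expansion of $a$ by the FTC in the frequency variables of $\sigma$, and the direct trilinear estimate by the $T(1)$ checklist, is exactly the fix.
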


\noi
Once we prove that the commutators
\([T_\sigma, a]_i, i=1, 2,\)
are bilinear \cz operators,  
 the end-point boundedness results directly follow from Theorem~B. Theorem~\ref{main1} also admits a natural converse, see the remark at the end of this paper; thus making Theorem~\ref{main1} the natural bilinear extension of Theorem~A.

Combining Theorem~\ref{main1} with Theorem~E, we immediately obtain the following compactness result for the iteration of commutators.

\begin{theorem}
\label{main2}
Let \(T_\sigma\in \mathcal{O}p \, BS^1_{1, 0}\),
$a$  be a Lipschitz function such that \(\nabla a\in L^\infty\), and $b, b_1, b_2\in CMO$.
Then,  \([[T_\sigma, a]_i, b]_j\), \(i, j=1, 2,\) and \([[[T_\sigma, a]_i, b_1]_1, b_2]_2\), \(i=1, 2,\) are bilinear compact operators from \(L^p\times L^q\to L^r\) for \(1/p+1/q=1/r, 1<p,q<\infty\) and  \(1\leq r <\infty\).
\end{theorem}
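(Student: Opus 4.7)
The plan is to reduce Theorem~\ref{main2} to a direct combination of Theorem~\ref{main1} and Theorem~E, in close analogy with how Corollary~F was deduced from Theorems~D and~E. The crucial point is that every operator in the statement is a commutator (simple or iterated) of $[T_\sigma, a]_i$ with $CMO$ functions, so once $[T_\sigma, a]_i$ has been identified as a bilinear Calder\'on-Zygmund operator, the rest follows automatically from Theorem~E.

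First, I would apply Theorem~\ref{main1} to conclude that, for each $i = 1, 2$, the operator $T := [T_\sigma, a]_i$ is itself a bilinear Calder\'on-Zygmund operator: its kernel satisfies the size and gradient estimates \eqref{cz-size} and \eqref{CZK}, and $T$ is bounded from $L^{p_0} \times L^{q_0}$ into $L^{r_0}$ for some H\"older triple with $1 < p_0, q_0 < \infty$. At this point I can drop all reference to $\sigma$ and $a$ and treat $T$ as an abstract bilinear Calder\'on-Zygmund operator, which is exactly the form of input required by Theorem~E.

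Then, applying the first statement of Theorem~E to $T$ with a single $CMO$ symbol $b$ yields the compactness of $[T, b]_j = [[T_\sigma, a]_i, b]_j$ from $L^p \times L^q$ into $L^r$ for $j = 1, 2$, in the range $1 < p, q < \infty$, $1/p + 1/q = 1/r$, $1 \le r < \infty$. The second statement of Theorem~E, applied with $b_1, b_2 \in CMO$, similarly delivers the compactness of $[[T, b_1]_1, b_2]_2 = [[[T_\sigma, a]_i, b_1]_1, b_2]_2$ in the same range. Together these cover every operator appearing in Theorem~\ref{main2}.

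The only real content of the proof therefore lies in Theorem~\ref{main1}; the hard part---smoothing the order-one operator $T_\sigma \in \mathcal{O}p\,BS^1_{1,0}$ down into the bilinear Calder\'on-Zygmund class by commutation with a Lipschitz function---has already been done. After that, the compactness machinery (which internally uses the $BMO$-density of $C^\infty_c$ in $CMO$ together with an Arzel\`a--Ascoli-type bilinear criterion) is fully encapsulated in Theorem~E, and no additional estimate is required for Theorem~\ref{main2}.
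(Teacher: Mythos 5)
Your proposal matches the paper's argument exactly: the paper proves Theorem~\ref{main2} by citing Theorem~\ref{main1} to identify $[T_\sigma, a]_i$ as a bilinear Calder\'on-Zygmund operator and then invoking Theorem~E for the compactness of its commutators with $CMO$ symbols. Nothing further is needed, and your reasoning is correct.
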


The remainder of our paper is devoted to the proof of Theorem~\ref{main1}. While the argument we present is influenced by Coifman and Meyer's exposition of the linear case, see \cite[Theorem 4, Chapter 9]{MC}, there are several technical obstacles in the bilinear setting that must be overcome.

\section{Proof of Theorem 1}\label{sec:2}

The proof can be summarized in the following statement:
the kernels of the commutators are indeed bilinear \cz
and
the commutators verify the conditions (i) and (ii)
in the $T(1)$ theorem (Theorem~C) from the bilinear Calder\'on-Zygmund theory.

We divide the proof of Theorem~\ref{main1} into several subsections. In Subsection~\ref{subsec:1},  we show that the kernels of the commutators $[T_\sigma, a]_i, i=1, 2,$ are Calder\'on-Zygmund. Sections~\ref{subsec:2}-\ref{subsec:4} are devoted to proving that the commutators satisfy the cancelation condition (ii)  in Theorem~C. Finally, in Subsection~\ref{subsec:5},  we prove that the commutators verify the bilinear weak boundedness property.

In the following,
 $a$ denotes a Lipschitz function such that $\nabla a\in L^\infty$ and
  $T=T_\sigma$ is  the bilinear pseudodifferential operator associated to
a symbol $\s\in BS_{1, 0}^1$, that is,~$\s$ satisfies
\begin{equation}
\label{bs110}
|\dd^\al_x \dd^\be_\xi \dd_\eta^\g \s(x, \xi, \eta) | \les \big(1 + |\xi| + |\eta|\big)^{1 - |\be|- |\g|},
\end{equation}

\noi
for all $x, \xi, \eta\in\re^n$ and all multi-indices $\al, \be, \g$.

\subsection
{Bilinear  \cz kernels}\label{subsec:1}

Let $K_j$  be the kernel of $[T, a]_j$, $j = 1, 2$. Then, we have
\begin{align*}
K_1(x, y, z) & = \big(a(y) - a(x)\big) K(x, y, z),\\
K_2(x, y, z) & = \big(a(z) - a(x)\big) K(x, y, z),
\end{align*}
where $K$ is the kernel of $T$.
Note that $K$ can be written (up to a multiplicative constant) as
\begin{equation}
 K(x, y, z) =  \iint e^{i\xi \cdot (x - y)}  e^{i\eta \cdot (x - z)} \s(x, \xi, \eta) d \xi d \eta.
\label{Q1}
 \end{equation}

\noi
There are certain decay estimates on $\dd^\al_x \dd^\be_y \dd^\g_z K(x, y, z)$,
when $x \ne y$ or $x \ne z$.

\begin{lemma}
\label{L00}
The kernel $K$ satisfies
\[
|\dd_x^\al \dd_y^\be \dd_z^\g K (x, y, z)|
\leq C(\al, \be, \g) \big(|x - y|+ |x-z|\big)^{-2n-1 -|\al|-|\be| - |\g|}.
\]

\noi
when $x \ne y$ or $x \ne z$.
\end{lemma}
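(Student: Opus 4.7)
The plan is to view $K$ as the oscillatory integral in \eqref{Q1} (made rigorous by inserting a Gaussian regulator $e^{-\varepsilon(|\xi|^2+|\eta|^2)}$ and passing to $\varepsilon\to 0$ away from the diagonal), differentiate formally in $x,y,z$, and then control the resulting oscillatory integral by a Littlewood--Paley decomposition combined with nonstationary--phase integration by parts. Writing $\Phi:=\xi\cdot(x-y)+\eta\cdot(x-z)$, each $y$-derivative pulls down $-i\xi$, each $z$-derivative pulls down $-i\eta$, and the $x$-derivatives distribute by Leibniz among the two exponentials (producing additional $i\xi$ or $i\eta$) and $\s$. This reduces the lemma to the following claim: if $\tau(x,\xi,\eta)$ satisfies
\[
|\dd_\xi^{\be'}\dd_\eta^{\g'}\tau(x,\xi,\eta)|\les(1+|\xi|+|\eta|)^{M-|\be'|-|\g'|},\qquad M:=1+|\al|+|\be|+|\g|,
\]
then $I_\tau(x,y,z):=\iint e^{i\Phi}\tau\,d\xi\,d\eta$ obeys $|I_\tau|\les\rho^{-(2n+M)}$, where $\rho:=|x-y|+|x-z|$. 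Each Leibniz piece has the form $\xi^{\al_1+\be}\eta^{\al_2+\g}\dd_x^{\al_3}\s$ with $\al_1+\al_2+\al_3=\al$, and lies in $BS_{1,0}^{M-|\al_3|}\subset BS_{1,0}^{M}$, so the reduced claim applies uniformly.

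For the reduced claim I would pick a dyadic partition $\sum_{j\ge 0}\psi_j(\xi,\eta)\equiv 1$ with $\psi_j$ supported in $\{|(\xi,\eta)|\sim 2^j\}$ for $j\ge 1$, and set $I_j:=\iint e^{i\Phi}\psi_j\tau\,d\xi\,d\eta$. Because $\nabla_{(\xi,\eta)}\Phi=(x-y,x-z)$ never vanishes outside $\Omega$, the first-order operator
\[
L\;:=\;\frac{-i}{|x-y|^2+|x-z|^2}\Bigl((x-y)\cdot\nabla_\xi+(x-z)\cdot\nabla_\eta\Bigr)
\]
satisfies $Le^{i\Phi}=e^{i\Phi}$, so transposing $L^N$ onto $\psi_j\tau$ yields $N$ derivatives in $(\xi,\eta)$ (each costing a factor $2^{-j}$) times $\rho^{-N}$. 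Combined with the measure of the support (${}\les 2^{2nj}$), this gives $|I_j|\les\rho^{-N}\,2^{(M-N+2n)j}$ for every $N\ge 0$, and trivially $|I_j|\les 2^{(M+2n)j}$. Splitting the dyadic sum at the scale $j=J$ with $2^J\sim\rho^{-1}$, using the trivial bound for $j\le J$ and the IBP bound with any fixed $N>M+2n$ for $j>J$, both sums are geometric and total $\les\rho^{-(2n+M)}$. When $\rho\ge 1$ the trivial range is empty, and a single IBP estimate at every scale suffices.

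The main technical obstacle is the rigorous treatment of the oscillatory integral, since $\s\in BS_{1,0}^{1}$ is not absolutely integrable in $(\xi,\eta)$ and the differentiated integrands are further from integrable. Concretely, one derives the above dyadic/IBP estimates uniformly for the Gaussian-truncated symbols $\s(x,\xi,\eta)\,e^{-\varepsilon(|\xi|^2+|\eta|^2)}$ (whose kernels $K_\varepsilon$ are $C^\infty$), applies dominated convergence on each dyadic annulus, and then lets $\varepsilon\to 0$ to obtain both the smoothness of $K$ on $\re^{3n}\setminus\Omega$ and the claimed derivative bounds. Once the reduction is in place, the remainder is Leibniz bookkeeping and geometric summation.
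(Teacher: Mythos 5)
Your proof is correct, and it takes a genuinely different route from the paper's. Both arguments regularize the symbol and pass to a limit (the paper uses a compactly supported cutoff $\psi_N(\xi,\eta)=\psi(\xi/N)\psi(\eta/N)$ rather than your Gaussian; either works, and your remark that the Gaussian factor is a symbol of order~$0$ uniformly in $\varepsilon$ is the key point that makes your version go through), and both reduce higher $(\al,\be,\g)$-derivatives to the $(\al,\be,\g)=(0,0,0)$ case for symbols of elevated order~$M$ via the same Leibniz bookkeeping (the paper records this as $\xi^{\wt\be}\eta^{\wt\g}\dd_x^\theta\s_N\in BS^{1+|\wt\be|+|\wt\g|}_{1,0}$). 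Where you diverge is in how you obtain the $\rho^{-(2n+M)}$ bound. The paper first reduces WLOG to $|x-y|\ge|x-z|$, then splits into the cases $|x-y|\ge 1$ and $|x-y|<1$; in the latter it rescales by $r=|x-y|$, introduces separate smooth cutoffs in $\xi$ and in $\eta$ to isolate four pieces $K_N^0,\dots,K_N^3$, and integrates by parts with $\Delta_\xi^m$ on each piece. You instead use a single dyadic Littlewood--Paley decomposition in the joint frequency variable $(\xi,\eta)$, transpose powers of the standard nonstationary-phase operator $L$ (which exploits $\nabla_{(\xi,\eta)}\Phi=(x-y,x-z)\ne 0$ directly, without picking out the larger of $|x-y|$, $|x-z|$), and sum a geometric series split at $2^J\sim\rho^{-1}$. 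Your argument is more uniform --- it handles $\rho\ge 1$ and $\rho<1$ in one sweep, avoids the rescaling and the auxiliary cutoffs, and is the more standard modern proof of kernel estimates for pseudodifferential operators; the paper's version is in the older Stein/Coifman--Meyer style and is a bit more hands-on about where the integrability comes from. Both are complete once the regularization and the Leibniz reduction are in place.
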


\noi
Assuming  Lemma~\ref{L00}, we can show the desired result about the kernels $K_1$ and $K_2$.

\begin{lemma}
\label{L0}
$K_1$ and $K_2$ are bilinear Calder\'on-Zygmund kernels.
\end{lemma}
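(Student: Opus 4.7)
The plan is to read off the kernel bounds for $K_i$ and their transposes directly from the identity $K_i = (a(\,\cdot\,)-a(x))K$, combined with the pointwise derivative estimates on $K$ supplied by Lemma~\ref{L00} and the Lipschitz bound $|a(u)-a(v)|\le \|\nabla a\|_{L^\infty}|u-v|$. The gain of one derivative that Lemma~\ref{L00} records on $K$ is exactly what the Lipschitz factor absorbs; that is the point of the smoothing effect, and it is what makes everything fit.

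First, for the size bound on $K_1$, I would estimate $|K_1(x,y,z)|\le \|\nabla a\|_{L^\infty}|x-y|\cdot|K(x,y,z)|$ and insert the $|\alpha|=|\beta|=|\gamma|=0$ case of Lemma~\ref{L00}, which gives $|K(x,y,z)|\lesssim(|x-y|+|x-z|)^{-2n-1}$. The factor $|x-y|$ cancels one power and yields $(|x-y|+|x-z|)^{-2n}$. A triangle-inequality remark (namely $|y-z|\le|x-y|+|x-z|$, so $|x-y|+|x-z|\asymp |x-y|+|x-z|+|y-z|$) turns this into the symmetric form \eqref{cz-size}. The argument for $K_2$ is identical with the Lipschitz factor $|x-z|$.

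Next, for the gradient bound \eqref{CZK} on $K_1$, I would apply the product rule to each of $\nabla_x K_1$, $\nabla_y K_1$, $\nabla_z K_1$. The derivatives landing on $a$ produce $\nabla a\in L^\infty$ against an undifferentiated $K$ with decay $(|x-y|+|x-z|)^{-2n-1}$; the derivatives landing on $K$ produce a factor $|x-y|$ (from the Lipschitz estimate of $a(y)-a(x)$) against $|\nabla K|\lesssim(|x-y|+|x-z|)^{-2n-2}$ (the $|\alpha|+|\beta|+|\gamma|=1$ case of Lemma~\ref{L00}). In both cases one gets the desired $(|x-y|+|x-z|)^{-2n-1}\asymp(|x-y|+|x-z|+|y-z|)^{-2n-1}$. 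The same step works for $K_2$.

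Finally, to get the kernel conditions on the transposes, I would relabel: $K_1^{*1}(x,y,z)=(a(x)-a(y))K(y,x,z)$, which is of exactly the same form under swapping $x\leftrightarrow y$, and $K_1^{*2}(x,y,z)=(a(y)-a(z))K(z,y,x)$, for which $|a(y)-a(z)|\lesssim|y-z|$ is now the absorbing factor; the decay from Lemma~\ref{L00} applied to $K(z,y,x)$ is $(|z-y|+|z-x|)^{-2n-1}$, still comparable to the symmetric expression $(|x-y|+|x-z|+|y-z|)^{-2n-1}$ by the same triangle inequality. The analogous relabeling handles the transposes of $K_2$. No single step is a genuine obstacle; the only mildly technical point is making sure the asymmetric denominator $(|x-y|+|x-z|)$ coming out of Lemma~\ref{L00} is always comparable to the fully symmetric denominator in \eqref{cz-size}--\eqref{CZK}, which is a one-line triangle-inequality observation applied consistently in every case.
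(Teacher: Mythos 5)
Your proposal is correct and follows essentially the same route as the paper's own proof: apply the Lipschitz bound to absorb one power of the distance gained by Lemma~\ref{L00}, use the product rule for the gradient estimate, and invoke the comparability $|x-y|+|x-z|\sim|x-y|+|x-z|+|y-z|$. The paper simply states these steps more tersely; your separate discussion of the transposes is a harmless redundancy, since the gradient condition \eqref{CZK} is already symmetric in $(x,y,z)$ and thus automatically covers $K^{*1}$ and $K^{*2}$.
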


\begin{proof}
By Lemma~\ref{L00} and noting that $|x-y| + |x- z|+ |y -z|\sim |x-y|+|x-z|$, we have
\begin{align*}
|K_1(x, y, z) |, |K_2(x, y, z) | & \les \|\nabla a\|_{L^\infty} \big(|x-y| + |x- z|+ |y -z|\big)^{-2n}, \\
|\nabla K_1(x, y, z) |,
|\nabla K_2(x, y, z) | & \les \|\nabla a\|_{L^\infty} \big(|x-y| + |x-z| + |y -z|\big)^{-2n - 1},
\end{align*}

\noi
on $\mathbb R^{3n}\setminus \Omega$, where
\(\Omega= \{(x,y,z) \in \re^{3n}: x=y=z \}\).
\end{proof}

The remainder of this subsection is devoted to the proof of Lemma~\ref{L00}.

\begin{proof}[Proof of Lemma \ref{L00}]
Let $\psi$ be a smooth cutoff function supported on $\{\xi\in\re^n: |\xi| \leq 2\}$
such that $\psi(\xi) = 1$ for $|\xi| \leq 1$.
For $N \in \mathbb{N}$, let $\psi_N(\xi, \eta) = \psi(\frac{\xi}{N})\psi(\frac{\eta}{N})$.
Note that
\begin{equation}
|\dd^\be_\xi \dd_\eta^\g \psi_N(\xi, \eta)| =
\begin{cases}
O( N^{-|\be|-|\g|}) = O \big((|\xi|+|\eta|)^{-|\be| - |\g|}\big), & N \leq |\xi|, |\eta| \leq 2N,\\
0, & \text{otherwise}.
\end{cases}
\label{Q2}
\end{equation}

\noi
for $(\be, \g) \ne (0, 0)$.
Moreover, for $\be \ne 0$, we have
\begin{equation}
|\dd^\be_\xi \psi_N(\xi, \eta) |=
\begin{cases}
O( N^{-|\be|}) = O \big((|\xi|+|\eta|)^{-|\be| - |\g|}\big), & N \leq |\xi|  \leq 2N,\\
0, & \text{otherwise}.
\end{cases}
\label{Q2a}
\end{equation}

\noi
since $\psi_N$ is non-trivial only if $|\eta| \leq 2N$.
A similar estimate holds for $|\dd_\eta^\g \psi_N (\xi, \eta)|$, $\g \ne 0$.
Hence, we have
\begin{equation}
\s_N (x, \xi, \eta) := \s(x, \xi, \eta) \psi_N (\xi, \eta) \in B S^1_{1, 0}
\label{Q3}
\end{equation}

\noi
and, moreover,
we have
$|\dd^\al_x \dd^\be_\xi \dd_\eta^\g \s_N(x, \xi, \eta) | \les (1 + |\xi| + |\eta|)^{1 - |\be| - |\g|}$,
where the implicit constant is independent of $N$.
Now, let
\[ K_N(x, y, z) =  \iint
e^{i\xi \cdot (x - y)}  e^{i\eta \cdot (x - z)} \s_N (x, \xi, \eta) d \xi d \eta.\]

\noi
In the following, we show that
\begin{equation}
| \dd_x^\al \dd_y^\be \dd_z^\g K_N(x, y, z)|
\leq C(\al, \be, \g) \big(|x - y|+ |x-z|\big)^{-2n -1 -|\al|-|\be| - |\g|}
\label{Q4}
\end{equation}

\noi
uniformly in $N$. Since $\s_N (x, \xi, \eta)$ converges 
pointwise to $\s (x, \xi, \eta)$, it follows that $K_N$ converges to $K$ in the sense of distributions. This in turn shows that the estimates in \eqref{Q4} hold for $K(x, y, z)$ as well, yielding our lemma. The remainder of the proof is therefore concerned with \eqref{Q4}.

\smallskip

First, we consider the case $\al =\be = \g = 0$, that is, we estimate $K_N(x, y, z)$. Without loss of generality, let us assume that $|x - y| \geq |x-z|$; in particular, we have $|x- y| \sim |x- y| + |x-z|$.

\smallskip

\noi {\bf Case (i):} $|x - y| \geq 1$.

Note that
\( e^{i\xi \cdot (x - y) } = - \displaystyle\frac{1}{|x-y|^2} \Dl_\xi e^{i\xi \cdot (x - y) }.\) Let $m\in\mathbb N$ be such that $2m-1>2n$. Then, integrating by parts, we have
\begin{align*}
| K_N(x, y, z)  |
& = \frac{1}{|x-y|^{2m}}
\bigg|\iint e^{i\xi \cdot (x - y)} e^{i\eta \cdot (x - y)}
\Dl_\xi^m \s_N(x, \xi, \eta) d \xi d \eta\bigg|\notag \\
& \les
  \frac{1}{|x-y|^{2m}}
\iint
\frac{1}{(1+|\xi|+|\eta|)^{2m-1} }
d \xi d \eta\notag \\
& \leq
  \frac{1}{|x-y|^{2m}}
\int \frac{1}{(1+|\xi|)^{m - \frac 12} }
d \xi
\int
\frac{1}{(1+|\eta|)^{m-\frac 12} }
 d \eta\notag \\
& \les |x - y|^{-2m}
\leq |x - y|^{-2n - 1}.
\end{align*}
Hence, \eqref{Q4} holds in this case.

\medskip

\noi
{\bf Case (ii):} $|x - y| < 1$.

Fix $x, y$ with $x \ne y$
and let $ r= |x - y| \sim|x-y| + |x - z|$.
Then, write $x- y $
as
\[x - y = r u \]

\noi
for some unit vector $u$.
With the smooth cutoff function  $\psi$ supported on $\{\xi\in\re^n: |\xi|\leq 2\}$ as above,
 define $\wt \psi = 1 - \psi$. Then, by a change of variables, we have
\begin{align}
 K_N(x, y, z)
 & = \frac{1}{r^{2n}} \iint e^{i\xi \cdot u} e^{i r^{-1}\eta\cdot (x - z)}
  \s_N(x, r^{-1} \xi, r^{-1}\eta ) d \xi d\eta \notag\\
 & = \frac{1}{r^{2n}} \iint e^{i\xi \cdot u} e^{i r^{-1}\eta\cdot (x - z)}
  \s_N(x, r^{-1} \xi, r^{-1}\eta ) \psi(\eta)d \xi d\eta \notag\\
& \hphantom{XXXX}
+ \frac{1}{r^{2n}}
 \iint e^{i\xi \cdot u} e^{i r^{-1}\eta\cdot (x - z)}
  \s_N(x,r^{-1} \xi, r^{-1}\eta ) \wt  \psi(\eta)d \xi d\eta \notag\\
& = :  K^0_N(x, y, z)  +  K^1_N(x, y, z).
\label{Q5}
 \end{align}
Then,  by inserting another cutoff in $\xi$, we write $K^0_N$ as
\begin{align}
 K_N^0(x, y, z)
 & = \frac{1}{r^{2n}} \iint e^{i\xi \cdot u} e^{i r^{-1}\eta\cdot (x - z)}
  \s_N(x, r^{-1} \xi, r^{-1}\eta ) \psi(\xi) \psi(\eta)d \xi d\eta \notag\\
& \hphantom{XXXX}
+ \frac{1}{r^{2n}}
 \iint e^{i\xi \cdot u} e^{i r^{-1}\eta\cdot (x - z)}
  \s_N(x, r^{-1} \xi, r^{-1}\eta ) \wt \psi(\xi) \psi(\eta)d \xi d\eta \notag\\
& = :  K^2_N(x, y, z)  +  K^3_N(x, y, z) .
\label{Q6}
 \end{align}

We begin by estimating $K^2_N$.
Since $|\s_N(x, r^{-1} \xi, r^{-1} \eta) | \les r^{-1}$ on $\{ |\xi|, |\eta| \leq 2\}$,
we have
\begin{align}
| K^2_N(x, y, z) | \les r^{-2n -1} \sim \big( |x-y|+ |x - z|\big)^{-2n-1}.
\label{Q7}
\end{align}


Note now that
\begin{align}
	 |\dd_\xi^\be  \dd_\eta^\g
\s_N(x, r^{-1} \xi, r^{-1} \eta) |
& = r^{-|\be|-|\g|}|\dd_2^\be \dd_3^\g
\s_N(x, r^{-1} \xi, r^{-1}\eta) | \notag \\
&  \les r^{-1} (r + |\xi| + |\eta|)^{1-|\be|-|\g|}\notag\\
& \les r^{-1} (1 + |\xi| + |\eta|)^{1-|\be|-|\g|},
\label{Q8}
\end{align}

\noi
where the last inequality holds
if  $|\xi|\geq 1$ or $|\eta|\geq 1$.
%
Then, proceeding as before with integration by parts and using  \eqref{Q8}, we have
\begin{align}
|K_N^1(x, y, z) |
& = \frac{1}{r^{2n}}
\bigg|\iint e^{i\xi \cdot u} e^{i r^{-1} \eta \cdot (x - z)}
\Dl_\xi^m \s_N(x, r^{-1} \xi, r^{-1}\eta) \wt \psi(\eta) d \xi d \eta \bigg|\notag \\
& \les
r^{-2n-1}
\iint
\frac{1}{(1+|\xi|+|\eta|)^{2m-1} }
d \xi d \eta\notag \\
%
& \les r^{-2n-1},
\label{Q9}
\end{align}

\noi
as long as  $2m - 1 > 2n$. Similarly, integrating by parts with \eqref{Q8} and noting that, for $\be \ne 0$, we have
$ \dd_\xi^\beta \wt \psi(\xi) = 0$ unless $|\xi| \in [1, 2]$,
we have
\begin{align}
|K_N^3(x, y, z)  |
& = \frac{1}{r^{2n}}
\bigg| \iint e^{i\xi \cdot u} e^{i r^{-1}\eta\cdot (x - z)}
\Dl_\xi^m \big(  \s_N(x, r^{-1} \xi, r^{-1}\eta ) \wt \psi(\xi)\big) \psi(\eta)d \xi d\eta\bigg| \notag\\
& \les
r^{-2n-1}
+  \frac{1}{r^{2n}}
\bigg|\iintt_{|\xi|\geq 1, |\eta|\leq 2} e^{i\xi \cdot u} e^{i r^{-1}\eta\cdot (x - z)}
\Dl_\xi^m \big(  \s_N(x, r^{-1} \xi, r^{-1}\eta ) \big)
\wt \psi(\xi) \psi(\eta) d \xi\bigg|\notag \\
& \les r^{-2n-1},
\label{Q10}
\end{align}

\noi
as long as  $2m - 1 > n$ in this case. Finally, combining the estimates \eqref{Q7},  \eqref{Q9}, and \eqref{Q10} yields \eqref{Q4}.

\smallskip

Next, we consider the case $(\al, \be, \g) \ne (0, 0, 0)$.
Note that $\xi^{\wt \be} \eta^{\wt \g}\dd_x^\theta \s_N \in BS^{1+|\wt \be| + |\wt \g|}_{1, 0}$,
where the implicit constant on the bounds
of the derivatives of
$\xi^{\wt \be} \eta^{\wt \g}  \dd_x^\theta \s_N $ is independent of $N$ and $\theta$.
Then, we have
\[ \dd_x^\al \dd_y^\be \dd_z^\g
K_N(x, y, z) =  \iint e^{i\xi \cdot (x - y)} e^{i \eta\cdot(x - z)} \wt \s_N(x, \xi, \eta)  d \xi d \eta,\]

\noi
for some
$\wt \s_N \in BS^{1+ |\al|+|\be|+|\g|}_{1, 0}$.

When $|x - y|\geq 1$, we can repeat the computation in Case (i)
and obtain \eqref{Q4} by choosing $2 m - 1 - |\al| - |\be| - |\g| > 2n$.
Now, assume $|x - y| < 1$. For $K^2_N$,  it suffices to note that
$|\wt \s_N(x, r^{-1} \xi, r^{-1} \eta) | \les r^{-1 - |\al| - |\be| - |\g|}$ on $\{ |\xi|, |\eta| \leq 2\}$.
For $K^1_N$ and $K^3_N$, we note that
\begin{align*}
	 |\dd_\xi^{\wt \be}  \dd_\eta^{\wt \g}
\wt \s_N(x, r^{-1} \xi, r^{-1} \eta) |
& = r^{-|\wt \be|-|\wt \g|}|\dd_2^{\wt \be} \dd_3^{\wt \g}
\wt \s_N(x, r^{-1} \xi, r^{-1}\eta) | \notag \\
&  \les r^{-1-|\al| - |\be|-|\g|} (r + |\xi| + |\eta|)^{1+ |\al|+|\be|+|\g|-|\wt \be|-|\wt \g|}\notag\\
&  \les r^{-1-|\al| - |\be|-|\g|} (1 + |\xi| + |\eta|)^{1+ |\al|+|\be|+|\g|-|\wt \be|-|\wt \g|},
\end{align*}

\noi
where the last inequality holds if  $|\xi|\geq 1$ or $|\eta|\geq 1$.
The rest follows as in Case (ii).
\end{proof}

\subsection{A representation of the class $BS_{1, 0}^1$ via $BS_{1, 0}^0$}\label{subsec:2}

Without loss of generality, we will assume that $\s( x, 0, 0)=0$. This is possible because
even if we  replace $\s$ by $\s_0$, where $\s_0(x, \xi , \eta) = \s(x, \xi , \eta) - \s(x, 0, 0),$
the commutators are unchanged. Namely, $[T_\s, a]_j = [T_{\s_0}, a]_j$ for $j = 1, 2$.
Note that  $\s_0( x, 0, 0)=0$ and $\s_0\in BS_{1, 0}^1$.
 We can further assume that $\s$ has compact support; this justifies the manipulations in the following.
 A standard limiting argument then removes this additional assumption; see, for example, the discussion about loosely convergent sequences of $BS_{\rho, \delta}^m$ symbols in \cite{BT}, also Stein \cite[pp. 232-233]{S}.

\begin{lemma}
\label{L1}
The symbol $\s \in BS^1_{1, 0}$ has the representation
$\s = \sum_{j = 1}^n (\xi_j \s_j  + \eta_j \wt{\s}_j),$
where $\s_j, \wt{\s}_j \in BS^0_{1, 0}$.
In particular, if $T_j$ and $\wt T_j$ are the bilinear pseudodifferential operators corresponding to $\s_j$ and $\wt \s_j$, respectively, then we have
\[
T(f, g) = \sum_{j = 1}^n \big[T_j (D_j f, g) + \wt T_j (f, D_jg)\big],
\]

\noi
where $T = T_\s \in  \mathcal{O}p BS^1_{1, 0}$.
\end{lemma}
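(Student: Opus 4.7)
The plan is to apply the fundamental theorem of calculus in the frequency variables, exploiting the assumption $\sigma(x,0,0) = 0$. Writing
\begin{equation*}
\sigma(x,\xi,\eta) = \int_0^1 \frac{d}{dt} \sigma(x, t\xi, t\eta)\, dt = \sum_{j=1}^n \xi_j \int_0^1 (\partial_{\xi_j}\sigma)(x, t\xi, t\eta)\, dt + \sum_{j=1}^n \eta_j \int_0^1 (\partial_{\eta_j}\sigma)(x, t\xi, t\eta)\, dt,
\end{equation*}
I would set
\begin{equation*}
\sigma_j(x,\xi,\eta) := \int_0^1 (\partial_{\xi_j}\sigma)(x, t\xi, t\eta)\, dt, \qquad \widetilde{\sigma}_j(x,\xi,\eta) := \int_0^1 (\partial_{\eta_j}\sigma)(x, t\xi, t\eta)\, dt,
\end{equation*}
so that the decomposition $\sigma = \sum_{j=1}^n (\xi_j \sigma_j + \eta_j \widetilde{\sigma}_j)$ holds by construction.

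The main technical point is to verify that $\sigma_j, \widetilde{\sigma}_j \in BS^0_{1,0}$. Since $\sigma \in BS^1_{1,0}$, each of $\partial_{\xi_j} \sigma$ and $\partial_{\eta_j} \sigma$ is itself a $BS^0_{1,0}$ symbol. Differentiating under the integral and using the chain rule gives
\begin{equation*}
\partial_x^\alpha \partial_\xi^\beta \partial_\eta^\gamma \sigma_j(x,\xi,\eta) = \int_0^1 t^{|\beta|+|\gamma|} \bigl(\partial_x^\alpha \partial_\xi^\beta \partial_\eta^\gamma \partial_{\xi_j}\sigma\bigr)(x, t\xi, t\eta)\, dt,
\end{equation*}
which, by the $BS^0_{1,0}$ bound on $\partial_{\xi_j}\sigma$, is controlled by $C \int_0^1 t^{k} (1 + t|\xi|+t|\eta|)^{-k}\, dt$ with $k = |\beta|+|\gamma|$. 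I would show this is $\lesssim (1+|\xi|+|\eta|)^{-k}$ by setting $R = |\xi|+|\eta|$ and treating the cases $R \leq 1$ (where the integrand is uniformly bounded by $1$, comparable to $(1+R)^{-k}$) and $R > 1$ separately; in the latter case I would split the integral at $t = 1/R$, estimating $t^k(1+tR)^{-k} \leq t^k$ on $[0, 1/R]$ (contributing $\lesssim R^{-(k+1)}$) and $t^k(1+tR)^{-k} \lesssim R^{-k}$ on $[1/R, 1]$. The case $k = 0$ is immediate. The same argument handles $\widetilde{\sigma}_j$.

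Finally, for the operator identity, I would substitute $\sigma = \sum_j (\xi_j \sigma_j + \eta_j \widetilde{\sigma}_j)$ into the definition \eqref{T2} of $T_\sigma$ and pair the factor $\xi_j$ with $\widehat{f}(\xi)$ (respectively $\eta_j$ with $\widehat{g}(\eta)$) via $\xi_j \widehat{f}(\xi) = \widehat{D_j f}(\xi)$, where $D_j = -i\partial_{x_j}$. This directly yields $T(f,g) = \sum_{j=1}^n [T_j(D_j f, g) + \widetilde{T}_j(f, D_j g)]$. I expect the main obstacle to be the symbol-class verification: the factor $t^k$ produced by the chain rule must exactly compensate for the loss of decay coming from the rescaled argument $t(|\xi|+|\eta|)$ being smaller than $|\xi|+|\eta|$, and the splitting of the $t$-integral at $t \sim 1/R$ is precisely what captures this balance.
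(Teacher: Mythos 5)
Your proof is correct and follows essentially the same route as the paper: the fundamental theorem of calculus in the frequency variables (using $\sigma(x,0,0)=0$), the same definitions of $\sigma_j, \widetilde{\sigma}_j$, and differentiation under the integral to check the $BS^0_{1,0}$ bounds. The only cosmetic difference is in estimating $\int_0^1 t^k(1+t(|\xi|+|\eta|))^{-k}\,dt$: the paper uses the pointwise inequality $t(1+t(|\xi|+|\eta|))^{-1}\lesssim (1+|\xi|+|\eta|)^{-1}$ valid for $t\in[0,1]$ and then integrates the uniformly bounded integrand, whereas you split the $t$-integral at $t\sim 1/(|\xi|+|\eta|)$ -- both yield the required bound.
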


\begin{proof}
By the Fundamental Theorem of Calculus
with $\zeta = (\xi, \eta)$, we have
\begin{align*}
 \s(x, \xi, \eta)
 & =  \s(x, \xi, \eta) - \s(x, 0, 0)
 = \zeta \cdot \int_0^1 \nabla_{ \zeta'}  \s(x,\zeta' )\Big|_{\zeta' = t \zeta} dt\\
& = \sum_{j = 1}^n \big[\xi_j \s_j(x, \xi, \eta)
+ \eta_j \wt \s_j(x, \xi, \eta) \big],
 \end{align*}
where the symbols  $\s_j$ and $\wt \s_j$ are given by
\[\s_j(x, \xi, \eta )  = \int_0^1 \dd_{\xi'_j} \s(x, \xi' , t \eta)
\Big|_{\xi' = t \xi} dt \quad \text{and} \quad
\wt \s_j(x, \xi, \eta)  =
\int_0^1 \dd_{\eta'_j} \s(x, t\xi, \eta' )
\Big|_{\eta' = t \eta} dt.\]

It remains to show that $\s_j, \wt \s_j \in BS^0_{1, 0}$. First, note that, for $t \in [0, 1]$, we have
\begin{align}
t \big(1+t( |\xi|+ |\eta|) \big)^{-1}  \les (1+ |\xi| + |\eta| )^{-1} .
\label{XX}
\end{align}

\noi
By exchanging the differentiation with integration and applying \eqref{XX}, we have
\begin{align*}
|\dd_x^\al \dd_\xi^\be \dd_\eta^\g \s_j(x, \xi, \eta)|
& = \bigg| \int_0^1
 t^{|\be|+ |\g|}
\dd_x^\al \dd_{\xi'}^\be \dd_{\eta'}^\g \dd_{\xi'_j} \s(x, \xi', \eta' )\Big|_{(\xi', \eta') = t(\xi, \eta)}
 dt\bigg|\\
& \les
\int_0^1  t^{|\be|+ |\g|} \big(1+t (|\xi|+|\eta|)\big)^{-(|\be|+|\g|)}  dt\\
& \les (1+|\xi|+ |\eta|)^{-(|\be|+|\g|)},
\end{align*}

\noi
Therefore, $\s_j \in BS^0_{1, 0}$.
A similar argument shows that
$\wt \s_j \in BS^0_{1, 0}$.
\end{proof}

\subsection{Transposes of bilinear commutators}\label{subsec:3}

Recall that the commutators $[T, a]_1$
and $[T, a]_2$ are defined as
\begin{align}
[T, a]_1(f, g) & = T(af, g) - aT(f, g), \label{Z1}\\
[T, a]_2(f, g) & = T(f, ag) - aT(f, g).\label{Z2}
\end{align}
Given a bilinear operator $T$, the transposes $T^{*1}$ and $T^{*2}$ are defined by
\begin{equation*}
\jb{T(f, g), h}
= \jb{T^{*1}(h, g), f}
= \jb{T^{*2}(f, h), g},
\end{equation*}
where $\jb{\cdot, \cdot}$ denotes the dual pairing.

\begin{lemma}
\label{L2}
We have the following identities:
\begin{align}
\big([T, a]_1\big)^{*1}& =  - [T^{*1}, a]_1, \label{Z5}\\
\big([T, a]_1\big)^{*2}& =   [T^{*2}, a]_1-[T^{*2}, a]_2. \label{Z6}
\end{align}
Similarly, we have
\begin{align}
\big([T, a]_2\big)^{*1}& =  [T^{*1}, a]_2-[T^{*1}, a]_1,
\label{Z7}\\
\big([T, a]_2\big)^{*2}& =   -[T^{*2}, a]_2. \label{Z8}
\end{align}
\end{lemma}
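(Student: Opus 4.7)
The plan is to prove each of the four identities in \eqref{Z5}--\eqref{Z8} by a direct computation from the defining duality pairings, exploiting the single crucial fact that multiplication by the real-valued Lipschitz function $a$ is self-adjoint with respect to the pairing $\jb{\cdot,\cdot}$, i.e.~$\jb{af,h}=\jb{f,ah}$. Since the four identities are structurally parallel, I will only sketch the computations for \eqref{Z5} and \eqref{Z6}; the remaining two follow by the same scheme with the roles of the two input slots interchanged.

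For \eqref{Z5}, start from \eqref{Z1} and test against $h$:
\begin{align*}
\jb{[T,a]_1(f,g),h}
&= \jb{T(af,g),h} - \jb{aT(f,g),h}\\
&= \jb{T^{*1}(h,g),af} - \jb{T(f,g),ah}\\
&= \jb{aT^{*1}(h,g),f} - \jb{T^{*1}(ah,g),f}\\
&= -\jb{[T^{*1},a]_1(h,g),f},
\end{align*}
which by the defining property of $(\cdot)^{*1}$ yields \eqref{Z5}. The key moves are: use the $T^{*1}$ identity in the first term, use the $T^{*1}$ identity in the second term (after shifting $a$ to $h$ via self-adjointness), and then repackage the result as $[T^{*1},a]_1$.

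For \eqref{Z6}, test the same expression against $h$ but now unwind using $T^{*2}$:
\begin{align*}
\jb{[T,a]_1(f,g),h}
&= \jb{T(af,g),h} - \jb{T(f,g),ah}\\
&= \jb{T^{*2}(af,h),g} - \jb{T^{*2}(f,ah),g}\\
&= \jb{\bigl([T^{*2},a]_1 - [T^{*2},a]_2\bigr)(f,h),g},
\end{align*}
where in the last step I add and subtract $\jb{aT^{*2}(f,h),g}=\jb{T^{*2}(f,h),ag}$ and recognize the two bilinear commutators of $T^{*2}$ with $a$ in slots $1$ and $2$. This gives \eqref{Z6}. Identities \eqref{Z7} and \eqref{Z8} are obtained by the analogous calculations starting from \eqref{Z2} and testing against $h$ using $T^{*1}$ and $T^{*2}$, respectively; \eqref{Z8} mirrors \eqref{Z5}, and \eqref{Z7} mirrors \eqref{Z6}.

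There is really no substantive obstacle here: everything reduces to the self-adjointness of pointwise multiplication by $a$ together with the definitions of $T^{*1}$ and $T^{*2}$. The only point requiring attention is bookkeeping in \eqref{Z6} and \eqref{Z7}, where transposing a commutator in slot $1$ through $(\cdot)^{*2}$ (or vice versa) inevitably mixes both slot-$1$ and slot-$2$ commutators of the transposed operator, producing the difference $[T^{*2},a]_1-[T^{*2},a]_2$ rather than a single commutator. Once this combinatorial feature is recognized, the proof is a short line-by-line verification.
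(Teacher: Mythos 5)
Your proof is correct and is essentially identical to the paper's own argument: both use the self-adjointness of multiplication by $a$ together with the defining duality relations for $T^{*1}$ and $T^{*2}$, and in \eqref{Z6} both insert and subtract the term $\jb{aT^{*2}(f,h),g}$ to recognize the difference $[T^{*2},a]_1-[T^{*2},a]_2$. No discrepancies to report.
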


\begin{proof}
We briefly indicate the calculations that give ~\eqref{Z5} and ~\eqref{Z6}. The following sequence of equalities yields ~\eqref{Z5}:
\begin{align*}
\jb{[T, a]_1(f, g), h}
& = \jb{T(af, g), h}
 - \jb{aT(f, g), h}
= \jb{T^{*1}(h, g), af}
 - \jb{T(f, g), a h}\\
& = \jb{aT^{*1}(h, g), f}
 - \jb{T^{*1} (ah, g), f}
 = \jb{ - [T^{*1}, a]_1(h, g), f}.
\end{align*}
We also have
\begin{align*}
\jb{[T, a]_1(f, g), h}
& = \jb{T^{*2}(af, h), g}
 - \jb{T^{*2} (f, ah), g}\\
& = \jb{T^{*2}(af, h), g}
- \jb{a T^{*2}(f, h), g}
 - \big( \jb{T^{*2} (f, ah), g}
  - \jb{a T^{*2}(f, h), g}\big)
 \\
&
 = \jb{ [T^{*2}, a]_1(f, h), g}
 -
\jb{ [T^{*2}, a]_2(f, h), g},
\end{align*}
thus proving ~\eqref{Z6}.
The identities \eqref{Z7} and \eqref{Z8} follow in a similar manner.
\end{proof}

\subsection{Cancelation conditions for bilinear commutators}\label{subsec:4}

We will prove here that the commutators satisfy the $BMO$ bounds in the bilinear $T(1)$ theorem (Theorem~C).

\begin{lemma}
\label{L3}
Let $T \in \mathcal{O}p BS^1_{1, 0}$
and $a$ be a Lipschitz function.
Then, we have $[T, a]_j\in BMO, j=1,2$.
\end{lemma}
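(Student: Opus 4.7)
The plan is to reduce the statement to the boundedness $L^\infty \times L^\infty \to BMO$ of bilinear Calder\'on--Zygmund operators of order $0$, by exploiting the representation provided in Lemma~\ref{L1}. Writing
\[
T(f,g) \;=\; \sum_{j=1}^n \big[ T_j(D_j f, g) \,+\, \widetilde T_j(f, D_j g) \big],
\]
where $T_j, \widetilde T_j \in \mathcal{O}p\, BS^0_{1,0}$, I would substitute this into $[T,a]_1(f,g) = T(af,g) - aT(f,g)$ and use the product rule $D_j(af) = (D_j a) f + a(D_j f)$ to rearrange:
\[
[T, a]_1(f, g) \;=\; \sum_{j=1}^n \Big\{ T_j\big((D_j a)\,f,\, g\big) \,+\, [T_j, a]_1(D_j f, g) \,+\, [\widetilde T_j, a]_1(f, D_j g) \Big\}.
\]
The analogous identity for $[T,a]_2$ distributes the derivative across the second argument and yields
\[
[T, a]_2(f, g) \;=\; \sum_{j=1}^n \Big\{ \widetilde T_j\big(f,\, (D_j a)\, g\big) \,+\, [T_j, a]_2(D_j f, g) \,+\, [\widetilde T_j, a]_2(f, D_j g) \Big\}.
\]

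Next, I would interpret these identities at the constant pair $(f,g)=(1,1)$. Since $D_j 1 = 0$, the two commutator sums in each identity vanish, leaving the key formulas
\[
[T, a]_1(1, 1) \;=\; \sum_{j=1}^n T_j(D_j a,\, 1) \qquad \text{and} \qquad [T, a]_2(1, 1) \;=\; \sum_{j=1}^n \widetilde T_j(1,\, D_j a).
\]
Because $a$ is Lipschitz, $D_j a \in L^\infty$, and by Theorem~D each $T_j$ and $\widetilde T_j$ is a bilinear Calder\'on--Zygmund operator. Applying the endpoint mapping $L^\infty \times L^\infty \to BMO$ from Theorem~B(b) to each summand, I would conclude that both $[T,a]_1(1,1)$ and $[T,a]_2(1,1)$ belong to $BMO$.

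The main obstacle is to make sense of, and rigorously justify, the formal manipulations with the constant input $(1,1)$, since $1 \notin \mathcal{S} \cup L^2$. I would handle this in the standard way: define $T(1,1)$, and similarly each $T_j(D_j a, 1)$ and $\widetilde T_j(1, D_j a)$, as elements of $BMO$ modulo constants via duality against $H^1$, using the bilinear Calder\'on--Zygmund kernel representation. The product-rule identities above are to be read in this distributional sense, and one needs to check that each term of the form $T_j([D_j,a]\cdot,\cdot)$ etc.\ still makes sense modulo constants when one of the arguments is frozen at $1$. With that framework in place, the size and regularity of $K$ and $K_j,\widetilde K_j$ (via Lemma~\ref{L00} for $T$ and its zero-order analogue for $T_j,\widetilde T_j$) ensure that all the pairings with $H^1$ atoms converge absolutely, and the reduction to Theorem~B(b) is clean.

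Finally, to complete the verification of condition~(ii) of Theorem~C for $[T,a]_j$ one also needs $\big([T,a]_j\big)^{*i}(1,1)\in BMO$ for $i=1,2$; by Lemma~\ref{L2} each such transpose is a linear combination of commutators $[T^{*k}, a]_l$, and I would reduce those to the same scheme by applying the Lemma~\ref{L1} decomposition to the (distributional) symbols of $T^{*1}, T^{*2}$ and repeating the product-rule argument above, so that only terms of the form $T^{*k}_j(D_j a, 1)$ or $\widetilde T^{*k}_j(1, D_j a)$ survive, which again lie in $BMO$ by Theorem~B(b).
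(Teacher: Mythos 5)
Your proposal is correct and follows essentially the same route as the paper: both arrive at the key formulas $[T,a]_1(1,1) = \sum_{j=1}^n T_j(D_j a, 1)$ and $[T,a]_2(1,1) = \sum_{j=1}^n \widetilde T_j(1, D_j a)$ via Lemma~\ref{L1}, then invoke Theorem~D together with the $L^\infty\times L^\infty\to BMO$ endpoint of Theorem~B. The only cosmetic difference is that you first derive a product-rule identity for general $(f,g)$ before specializing to $(1,1)$, whereas the paper sets $(f,g)=(1,1)$ from the outset; the remark on transposes belongs to Lemma~\ref{L4} rather than this one.
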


\begin{proof}
By Lemma~\ref{L1}, we have
\begin{align*}
[T, a]_1 (1, 1)& = T(a, 1) - \underbrace{aT(1, 1)}_{= 0}
= \sum_{j = 1}^n \big[ T_j (D_j a, 1) + \underbrace{\wt T_j (a, D_j 1)}_{ =0} \big] \\
& = \sum_{j = 1}^n  T_j (D_j a, 1).
\end{align*}

\noi
It follows from Theorem~D that $T_j \in \mathcal{O}p\,BS^0_{1, 0}$ are bilinear Calder\'on-Zygmund operators.
Then, by Theorem~B, we obtain that $T_j (D_j a, 1)\in BMO$, since $D_j a \in L^\infty$.
Therefore, we conclude that  $[T, a]_1 (1, 1)\in BMO$.


Similarly, we have
\begin{align*}
[T, a]_2(1, 1) & = T(1, a) - \underbrace{aT(1, 1)}_{= 0}
 = \sum_{j = 1}^n \big[ \underbrace{T_j (D_j 1, a)}_{= 0} + \wt T_j (1, D_j a) \big] \\
& = \sum_{j = 1}^n  \wt T_j (1, D_j a)\in BMO,
\end{align*}
since $D_j a \in L^\infty$ and $\wt T_j \in \mathcal{O}p\,BS^0_{1, 0}$.
\end{proof}

\begin{lemma}
\label{L4}
Let $T$ and $a$ be as in Lemma \ref{L3}.
Then, we have  $[T, a]_j^{*i}\in BMO, \,  i, j=1,2$.
\end{lemma}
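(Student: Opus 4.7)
The plan is to combine the transpose identities of Lemma~\ref{L2} with Lemma~\ref{L3} applied to the transposes $T^{*1}$ and $T^{*2}$. Concretely, by Lemma~\ref{L2} evaluated at $(1,1)$,
\begin{align*}
([T,a]_1)^{*1}(1,1) &= -[T^{*1},a]_1(1,1),\\
([T,a]_1)^{*2}(1,1) &= [T^{*2},a]_1(1,1) - [T^{*2},a]_2(1,1),\\
([T,a]_2)^{*1}(1,1) &= [T^{*1},a]_2(1,1) - [T^{*1},a]_1(1,1),\\
([T,a]_2)^{*2}(1,1) &= -[T^{*2},a]_2(1,1),
\end{align*}
so it suffices to prove that $[T^{*i},a]_k(1,1)\in BMO$ for $i,k\in\{1,2\}$.

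The key intermediate step is to show that $T^{*i}\in \mathcal{O}p\,BS^1_{1,0}$ for $i=1,2$. This is the bilinear analogue of the fact that linear H\"ormander classes with $\rho>\delta$ are stable under transposition, and follows from the bilinear symbolic calculus developed in \cite{BMNT}. For a self-contained verification, I would start from the Lemma~\ref{L1} decomposition and compute by duality
\begin{align*}
T^{*1}(h,g) &= \sum_{j=1}^n\bigl[-D_j T_j^{*1}(h,g) + \widetilde T_j^{*1}(h, D_j g)\bigr],\\
T^{*2}(f,h) &= \sum_{j=1}^n\bigl[T_j^{*2}(D_j f, h) - D_j \widetilde T_j^{*2}(f, h)\bigr],
\end{align*}
where, by Theorem~D, $T_j^{*i},\widetilde T_j^{*i}\in \mathcal{O}p\,BS^0_{1,0}$. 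Translating to symbols, each summand corresponds to an order-zero symbol multiplied by $\xi_j$ or $\eta_j$, plus order-zero $x$-derivatives of an order-zero symbol; all such terms lie in $BS^1_{1,0}$. Hence $T^{*i}\in\mathcal{O}p\,BS^1_{1,0}$.

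With this in hand, Lemma~\ref{L3} applied to $T^{*i}$ in place of $T$ gives $[T^{*i},a]_k(1,1)\in BMO$ for $i,k\in\{1,2\}$. The WLOG assumption $\sigma(x,0,0)=0$ used in Subsection~\ref{subsec:2} is harmless here: subtracting $\sigma^{*i}(x,0,0)$ from the symbol of $T^{*i}$ produces an operator in $\mathcal{O}p\,BS^1_{1,0}$ whose symbol vanishes at the origin and which differs from $T^{*i}$ only by pointwise multiplication by $\sigma^{*i}(x,0,0)$, so the commutators $[T^{*i},a]_k$ are unchanged. Combining this with the Lemma~\ref{L2} identities above completes the proof.

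The main obstacle is the closure statement $T^{*i}\in\mathcal{O}p\,BS^1_{1,0}$; everything else is bookkeeping. Either invoking the bilinear calculus of \cite{BMNT} in one line, or carefully tracking that each term produced by the dual decomposition of the Lemma~\ref{L1} formula remains in $BS^1_{1,0}$, resolves this point and the rest of the argument is immediate.
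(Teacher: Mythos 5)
Your argument is correct and follows the same route as the paper: reduce via the Lemma~\ref{L2} transpose identities to showing that commutators of $T^{*i}$ with $a$ land in $BMO$, invoke stability of $BS^1_{1,0}$ under transposition, and then apply Lemma~\ref{L3}. The paper dispatches the transpose closure $T^{*i}\in\mathcal{O}p\,BS^1_{1,0}$ in one line by citing Theorem~2.1 of \cite{BMNT}, whereas you additionally offer a ``self-contained'' verification by dualizing the Lemma~\ref{L1} decomposition and invoking Theorem~D for the order-zero pieces $T_j^{*i},\widetilde T_j^{*i}$. This derivation is valid (the symbol of $D_j T_j^{*1}$, say, is a linear combination of $(\xi_j+\eta_j)\tau_j^{*1}$ and an $x$-derivative of $\tau_j^{*1}$, all of which sit in $BS^1_{1,0}$), but note that Theorem~D is itself a consequence of the same bilinear symbolic calculus, so the argument is not independent of \cite{BMNT} so much as it reroutes through a different corollary of it. Your remark about the harmlessness of the normalization $\sigma^{*i}(x,0,0)=0$ is also correct and worth making explicit, since the paper's proof does not comment on it; the difference operator is just pointwise multiplication by $\sigma^{*i}(x,0,0)$, which commutes with multiplication by $a$.
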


\begin{proof}
From Theorem 2.1 in \cite{BMNT},  we know that if $T \in \mathcal{O}p\,BS^1_{1, 0}$, then $T^{*1}, T^{*2} \in \mathcal{O}p\,BS^1_{1, 0}$ as well. By Lemma~\ref{L2}, for $i=1, 2$, the transposes $[T, a]_1^{*i}$ and $[T, a]_2^{*i}$ consist of commutators of $T^{*1}$ and $T^{*2}$ with the Lipschitz function $a$. The conclusion now follows from Lemma~\ref{L3}.
\end{proof}

\subsection{The weak boundedness property for bilinear commutators}\label{subsec:5}

A function $\phi \in \mathcal{D}$
is called a \emph{normalized bump function of order $M$} if
$\supp \phi \subset B_0(1)$
and $\|\dd^\al \phi\|_{L^\infty} \leq 1$ for all multi-indices $\al$ with $|\al|\leq M$.
Here, $B_x (r)$ denotes the ball of radius $r$ centered at $x$.

We say that a bilinear singular integral operator
$T:\mathcal S \times \mathcal S\to \mathcal S'$
has the \emph{(bilinear) weak boundedness property}
if there exists $M \in \mathbb{N}\cup\{0\}$
such that for all normalized bump functions $\phi_1, \phi_2$, and $\phi_3$
of order $M$, $x_1, x_2, x_3 \in \re^n$ and $t > 0$,
we have
\begin{equation}
\big|\jb{T(\phi_1^{x_1, t}, \phi_2^{x_2, t}), \phi_3^{x_3, t})}\big|
\les t^n,
\label{Z9}
\end{equation}
	
\noi
where $\phi_j^{x_j, t}(x) = \phi_j\big(\frac{x - x_j}{t}\big)$.
Note that
\begin{equation}
\| \partial_x^\al  \phi_j^{x_j, t}\|_{L^p} \les t^{\frac{n}{p} - |\al|}.
\label{Z10}
\end{equation}

\noi
The following lemma provides a simplification of the condition \eqref{Z9}.

\begin{lemma} \label{L8}
Let $T$ be a bilinear operator defined by \eqref{T1}
with a bilinear \cz kernel $K$, satisfying \eqref{cz-size}.
Then,  the weak boundedness property holds
if  there exists $M \in \mathbb{N}\cup\{0\}$ such that
\begin{equation}
\big|\jb{T(\phi_1^{x_0, t}, \phi_2^{x_0, t}), \phi_3^{x_0, t})}\big|
\les t^n,
\label{X1}
\end{equation}

\noi
for
 all normalized bump functions $\phi_1, \phi_2$, and $\phi_3$
of order $M$, $x_0 \in \re^n$ and $t > 0$.

\end{lemma}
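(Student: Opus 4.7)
\textbf{Proof plan for Lemma \ref{L8}.}
The plan is to split the estimate \eqref{Z9} into two regimes according to how far apart the three centers $x_1, x_2, x_3 \in \re^n$ are: a ``clustered'' regime, which reduces to the hypothesis \eqref{X1} by a rescaling argument, and a ``separated'' regime, which is handled directly by the kernel size estimate \eqref{cz-size}. Throughout, let $D = \max_{j \neq k} |x_j - x_k|$.

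In the clustered regime $D \leq 10 t$, all three supports $B_{x_j}(t)$ lie inside the common ball $B_{x_1}(11 t)$. One can write
$$\phi_j^{x_j, t}(x) = \psi_j\bigg(\frac{x - x_1}{11 t}\bigg), \qquad \text{where} \qquad \psi_j(u) = \phi_j\bigg(11 u - \frac{x_j - x_1}{t}\bigg).$$
A short verification shows $\supp \psi_j \subset B_0(1)$ and $\|\dd^\al \psi_j\|_{L^\infty} \leq 11^{|\al|}$ for all $|\al| \leq M$, so each $11^{-M}\psi_j$ is a normalized bump function of order $M$ centered at the origin. Applying the hypothesis \eqref{X1} at $x_0 = x_1$ with scale $11t$ to these rescaled bumps and absorbing the factor $11^{3M}$ into the implicit constant yields $|\jb{T(\phi_1^{x_1, t}, \phi_2^{x_2, t}), \phi_3^{x_3, t}}| \les (11 t)^n \les t^n$.

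In the separated regime $D > 10 t$, pick indices $j \neq k$ with $|x_j - x_k| = D$. For $y \in B_{x_j}(t)$ and $z \in B_{x_k}(t)$, the triangle inequality gives $|y - z| \geq D - 2t \geq \frac{4D}{5}$, so for any $x$ in the third support we have $|x - y| + |x - z| + |y - z| \geq \frac{4D}{5}$. By the kernel representation \eqref{T1} and the size bound \eqref{cz-size}, $|K(x, y, z)| \les D^{-2n}$ on the product of the three supports. Since each $\phi_j^{x_j, t}$ is bounded by $1$ and supported in a ball of volume $\sim t^n$, the triple integral defining the pairing is controlled by $D^{-2n} \cdot t^{3n} \les t^{-2n} \cdot t^{3n} = t^n$, as required.

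There is no genuine obstacle here beyond organizing the dichotomy: the kernel bound \eqref{cz-size} is precisely strong enough to absorb any separation between centers, while the clustered rescaling is engineered so that the same order $M$ of smoothness used in \eqref{X1} suffices to conclude \eqref{Z9}.
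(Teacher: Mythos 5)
Your proposal is correct and follows essentially the same two-regime dichotomy as the paper's proof: rescale to a common center when the supports cluster, and apply the kernel size bound \eqref{cz-size} directly when the centers separate. The paper pivots on $x_3$ with the threshold $3t$ and scale $4t$ rather than your diameter $D$ with threshold $10t$ and scale $11t$, and in the separated case it works with $\max(|x-y|,|x-z|) > t$ rather than $|x-y|+|x-z|+|y-z| \geq \tfrac{4}{5}D$, but these are purely cosmetic differences in the choice of constants.
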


\begin{proof}
Suppose that $T$ satisfies \eqref{X1} for some fixed $M$.
Fix $t > 0$
and
normalized bump functions $\phi_1, \phi_2$ and $\phi_3$ of order $M$
in the following.

\medskip

\noi
{\bf Case (i)}
Suppose that $|x_1 - x_3 |, |x_2 - x_3 | \leq  3t$.
For $j = 1, 2$, we define $ \psi_j$ by setting
\begin{equation*}
 \psi_j^{x_3, 4t}(x) =  \psi_j\big(\tfrac{x - x_3}{4t}\big)
:= \begin{cases}
4^{-M} \phi_j^{x_j, t}(x), & \text{if } x \in B_{x_j}(t), \\
0, & \text{otherwise}.
\end{cases}
\end{equation*}

\noi
Note that $ \psi_j$ is
a normalized bump function of order $M$.
For $j = 3$,  let $ \psi_3(x) = 4^{-M} \phi_3 (4x)$.
Note that $ \psi_3$ is also a normalized bump function of order $M$.
Then, by \eqref{X1}, we have
\begin{align*}
\big|\jb{T(\phi_1^{x_1, t}, \phi_2^{x_2, t}), \phi_3^{x_3, t})}\big|
= 4^{3M}
\big|\jb{T( \psi_1^{ x_3, 4t},  \psi_2^{x_3, 4t}),  \psi_3^{x_3,4t})}\big|
\les 4^{3M + n} t^n \sim t^n.
\end{align*}

\medskip

\noi
{\bf Case (ii)}
Suppose that $\max\big(|x_1 - x_3 |, |x_2 - x_3 |\big) >  3t$.
For the sake of the argument, 
suppose that $|x_1-x_3|>3t$. 
Then, by the triangle inequality, we have  
$|x-y|>|x_1-x_3|-|x-x_3|-|y-x_1|>t$
for 
for all $ x\in B_{x_3}(t)$ and $ y\in B_{x_1}(t)$.
A similar calculation shows that if $|x_2-x_3|>3t$, 
then we have $|x - z| > t$ 
for all $ x\in B_{x_3}(t)$ and $ z\in B_{x_2}(t)$.
Hence, we have
%
\[\max \big(|x-y|, |x-z|\big) > t\]

\noi
for all $ x\in B_{x_3}(t)$, $ y\in B_{x_1}(t)$ and $ z\in B_{x_2}(t)$ in this case.
Then, by
\eqref{T1}, \eqref{cz-size} and \eqref{Z10}, we have
\begin{align*}
\big|\jb{T(\phi_1^{x_1, t}, \phi_2^{x_2, t}), \phi_3^{x_3, t})}\big|
& \les t^{-2n} \iiint
| \phi_1^{x_1, t}(y) \phi_2^{x_2, t}(z)
\phi^{x_3, t}_3(x)| dy dz dx\\
& \les t^{-2n}
\prod_{j = 1}^3 \| \phi_j^{x_j, t}\|_{L^1}
\les t^n.
\end{align*}

\noi
Hence, \eqref{Z9} holds in both cases, thus completing the proof of the lemma.
\end{proof}

Now, we are ready to prove the weak boundedness property of the commutators.

\begin{lemma}
\label{L7}
Let $T \in \mathcal{O}p BS^1_{1, 0}$
and $a$ be a Lipschitz function.
Then, the bilinear commutators $[T, a]_j,$ $j=1, 2$, satisfy the weak boundedness property.
\end{lemma}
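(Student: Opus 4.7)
The plan is to reduce the weak boundedness property to the simplified form \eqref{X1} via Lemma~\ref{L8}, decompose $[T, a]_j$ through the representation of $T$ from Lemma~\ref{L1}, and then exploit the fact that on the common support $B_{x_0}(t)$ of the test bumps the Lipschitz function $a$ is controlled pointwise by the scale $t$. So fix normalized bump functions $\phi_1, \phi_2, \phi_3$ of some fixed order $M$, a point $x_0 \in \re^n$, and $t > 0$; the goal is the bound $|\jb{[T, a]_j(\phi_1^{x_0, t}, \phi_2^{x_0, t}), \phi_3^{x_0, t}}| \les t^n$ for $j = 1, 2$.

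An application of Lemma~\ref{L1} together with the Leibniz rule gives
\begin{align*}
[T, a]_1(f, g) &= \sum_{j=1}^n \Big[ T_j\big((D_j a)\, f,\, g\big) + [T_j, a]_1(D_j f, g) + [\wt T_j, a]_1(f, D_j g) \Big], \\
[T, a]_2(f, g) &= \sum_{j=1}^n \Big[ [T_j, a]_2(D_j f, g) + \wt T_j\big(f,\, (D_j a)\, g\big) + [\wt T_j, a]_2(f, D_j g) \Big],
\end{align*}
where $T_j, \wt T_j \in \mathcal{O}p\,BS^0_{1, 0}$. By Theorem~D these are bilinear \cz operators, so by Theorem~B they are bounded $L^2 \times L^2 \to L^1$. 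The non-commutator summands $T_j((D_j a)\phi_1^{x_0, t}, \phi_2^{x_0, t})$ and $\wt T_j(\phi_1^{x_0, t}, (D_j a)\phi_2^{x_0, t})$ are the easy ones: $D_j a \in L^\infty$ and $\|\phi_i^{x_0, t}\|_{L^2} \les t^{n/2}$, so the $L^2 \times L^2 \to L^1$ bound immediately produces an $L^1$ norm of order $t^n$, which upon pairing with $\phi_3^{x_0, t} \in L^\infty$ is $O(t^n)$.

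For the commutator summands I would replace $a$ by $b(x) := a(x) - a(x_0)$, which leaves every commutator unchanged but provides the key pointwise bound $|b(x)| \leq \|\nabla a\|_{L^\infty} t$ on $B_{x_0}(t)$. Expanding, for instance,
\[
[T_j, b]_1(D_j \phi_1^{x_0, t}, \phi_2^{x_0, t}) = T_j\big(b\, D_j \phi_1^{x_0, t},\, \phi_2^{x_0, t}\big) - b\, T_j\big(D_j \phi_1^{x_0, t},\, \phi_2^{x_0, t}\big),
\]
the first piece is handled by the $L^2 \times L^2 \to L^1$ bound using $\|b\, D_j \phi_1^{x_0, t}\|_{L^2} \les t \cdot t^{n/2 - 1} = t^{n/2}$, while the second, after pairing with $\phi_3^{x_0, t}$, becomes $\int T_j(D_j \phi_1^{x_0, t}, \phi_2^{x_0, t})\cdot b\,\phi_3^{x_0, t}\, dx$; this is bounded by $\|b\,\phi_3^{x_0, t}\|_{L^\infty}\, \|T_j(D_j \phi_1^{x_0, t}, \phi_2^{x_0, t})\|_{L^1} \les t\cdot t^{n-1} = t^n$. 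The remaining commutator summands are treated identically.

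The main obstacle is the $t^{-1}$ factor produced each time a $D_j$ hits a bump at scale $t$; this is exactly what prevents any direct estimate of $T$ itself, which is of order one. The argument above is the bilinear incarnation of the linear commutator smoothing effect: the bad $t^{-1}$ is compensated precisely by the $t$-gain coming from $|a(\cdot) - a(x_0)| \les t$ on the relevant supports, and Lemma~\ref{L1} is the device that makes this cancellation visible by reducing $T$ to order-zero pieces on which $L^p$ theory is available.
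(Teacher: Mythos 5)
Your proposal is correct and follows essentially the same route as the paper: reduce via Lemma~\ref{L8} to bumps at a common center, normalize so $a(x_0)=0$, invoke Lemma~\ref{L1} to pass to order-zero operators whose boundedness is available from Theorems~D and~B, and trade the $t^{-1}$ from $D_j$ against the $t$ gained from $|a-a(x_0)|\les t$ on $B_{x_0}(t)$. The only differences are cosmetic: you reorganize the Leibniz expansion so the commutator appears as $\sum_j T_j((D_j a)f,g)+[T_j,a]_1(D_jf,g)+[\wt T_j,a]_1(f,D_jg)$ before estimating, and you use the $L^2\times L^2\to L^1$ bound where the paper uses $L^4\times L^4\to L^2$.
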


\begin{proof}
We only show that the weak boundedness property holds for $[T, a]_1$. A similar argument holds for $[T, a]_2$.
By Lemma \ref{L8}, it suffices to prove \eqref{X1}.
First, note that
we can assume that $a(x_0) = 0$, since
replacing $a$ by $a - a(x_0)$ does not change
the commutator.
Then, by the Fundamental Theorem of Calculus,  we have
\begin{align}
\|a\|_{L^\infty(B_{x_0}(t))} \les t\|\nabla a\|_{L^\infty}.
\label{X2}
\end{align}

\noi
By writing
\begin{align*}
\big|\jb{[T, a]_1& (\phi_1^{x_0, t},  \phi_2^{x_0, t}), \phi_3^{x_0, t})}\big|
\\
& \leq
\big|\jb{T(a\phi_1^{x_0, t}, \phi_2^{x_0, t}), \phi_3^{x_0, t})}\big|
+  \big|\jb{aT(\phi_1^{x_0, t}, \phi_2^{x_0, t}), \phi_3^{x_0, t})}\big|
=: \I + \II,
\end{align*}

\noi
it suffices to estimate $\I$ and $\II$ separately.

First, we estimate $\II$.
By \eqref{Z10}, \eqref{X2}  and  Lemma~\ref{L1}, we have
\begin{align*}
\II
& \leq   \|aT(\phi_1^{x_0, t}, \phi_2^{x_0, t}) \|_{L^2(B_{x_0}(t))}
\| \phi_3^{x_0, t}\|_{L^2} \notag \\
& \les t^{\frac n2}
 \|a\|_{L^\infty(B_{x_0}(t))}
 \| T(\phi_1^{x_0, t}, \phi_2^{x_0, t}) \|_{L^2(B_{x_0}(t))}\notag \\
 & \les t^{\frac n2 + 1}
 \|\nabla a\|_{L^\infty}
 \bigg\| \sum_{j = 1}^n
 \big[ T_j (D_j \phi_1^{x_0, t}, \phi_2^{x_0, t})
 + \wt {T}_j ( \phi_1^{x_0, t},D_j \phi_2^{x_0, t})\big]
\bigg\|_{L^2}\notag
\end{align*}

\noi
By the fact that $T_j, \wt T_j \in \mathcal{O}p\, BS^0_{1, 0}$ and \eqref{Z10}, we have
\begin{align*}
\II & \les t^{\frac n2 + 1}
 \|\nabla a\|_{L^\infty}
\sum_{j = 1}^n \Big[
\| D_j \phi_1^{x_0, t}\|_{L^4}\| \phi_2^{x_0, t}\|_{L^4}
 + \| \phi_1^{x_0, t}\|_{L^4}\|D_j \phi_2^{x_0, t}\|_{L^4} \Big]\notag \\
& 
\les t^{n}\|\nabla a\|_{L^\infty}.
\end{align*}

Next, we estimate $\I$. As before, by  Lemma~\ref{L1}, \eqref{Z10}
and \eqref{X2}, we have
\begin{align*}
\I
& \les t^\frac{n}{2}
 \bigg\| \sum_{j = 1}^n
 \big[ T_j (D_j (a \phi_1^{x_0, t}), \phi_2^{x_0, t})
 + \wt {T}_j ( a \phi_1^{x_0, t},D_j \phi_2^{x_0, t})\big]
\bigg\|_{L^2}  \notag \\
& \les t^\frac{n}{2}
 \sum_{j = 1}^n
\Big[
\|D_j (a \phi_1^{x_0, t})\|_{L^4}\| \phi_2^{x_0, t}\|_{L^4}
 + \| a \phi_1^{x_0, t}\|_{L^4} \|D_j \phi_2^{x_0, t}\|_{L^4}
\Big] \notag \\
& \les t^\frac{n}{2}
 \sum_{j = 1}^n
\Big[
\|D_j (a) \phi_1^{x_0, t}\|_{L^4}\| \phi_2^{x_0, t}\|_{L^4}
+ \| a D_j\phi_1^{x_0, t}\|_{L^4}\| \phi_2^{x_0, t}\|_{L^4}\notag \\
& \hphantom{XXXXX}
 + \| a \phi_1^{x_0, t}\|_{L^4} \|D_j \phi_2^{x_0, t}\|_{L^4}
\Big]\notag \\
& \les t^\frac{n}{2}
 \sum_{j = 1}^n
\Big[
t ^{\frac n2} \|\nabla a\|_{L^\infty}
+ t^{\frac{n}{2}-1} \|a\|_{L^\infty(B_{x_0}(t))}
\Big]
 \les t^n  \|\nabla a\|_{L^\infty}.
\end{align*}

\noi
This completes the proof of Lemma \ref{L7} and thus the proof of Theorem~\ref{main1}.
\end{proof}

\noindent {\bf Remark.} We wish to end this work by observing that the converse of Theorem~\ref{main1} also holds.
Let $T_j \in \mathcal{O}pBS^1_{1, 0}$, $j = 1, \dots, n$,  be defined by $T_j (f, g)=(D_j f)g$.
Suppose that $[T_j, a]_1$ is bounded  from $L^4\times L^4$ into $L^2$,
$j = 1, \dots, n$.
Then, $a$ is a Lipschitz function.
See Theorem~A for the converse statement in the linear setting.

The proof is immediate.
Noting that  $[T_j, a]_1 (f, g)=(D_j a)fg$,
 the boundedness of $[T_j, a]_1$ then forces $D_j a\in L^\infty$
(say, by taking $f = g$ to be a bump function localized near the maximum of $D_j a$).
 Since this is true for all $1\leq j\leq n$, $a$ must be Lipschitz.

 In particular,
 if we assume that $[T, a]_1$ is bounded from $L^4\times L^4$ into $L^2$
for all $T\in \mathcal{O}p\, BS^1_{1, 0}$, then $a$ must be a Lipschitz function. Of course, the boundedness $[T, a]_1: L^4\times L^4\to L^2$ can be exchanged with a more general one $L^p\times L^q\to L^r$ for some H\"older triple $(p, q, r)\in [1, \infty)^3$.
 An analogous statement applies to the second commutator $[T, a]_2$.

\end{document}